\definecolor{red}{rgb}{0.7,0,0}
\numberwithin{equation}{section}
\newtheorem{theorem}{Theorem}[section]
\newtheorem{lemma}[theorem]{Lemma}
\newtheorem{proposition}[theorem]{Proposition}
\theoremstyle{definition}
\newtheorem{definition}[theorem]{Definition}
\newtheorem{remark}[theorem]{Remark}
\theoremstyle{remark}
\newcommand{\supp}[1]{{\rm supp {#1}}}
\renewcommand{\eqref}[1]{(\ref{#1})}
\renewcommand{\bigskip}{\vspace{0.2cm}}
\begin{document}

% add linebreaks in the title with \\
\title{Young's inequality for Banach function spaces and its application to the maximal regularity estimate}

% running title
%\runtitle{Atomic Mixed Morrey}

\author{Toru Nogayama \footnote{address:
Department of Mathematics,
Faculty of Science Division II,
Tokyo University of Science,
1-3 Kagurazaka, Shinjuku, Tokyo 162-8601, Japan
e-mail:toru.nogayama@gmail.com}
}

\date{}

\maketitle

{\bf keywords} Young's inequality, Banach function spaces, 
maximal regularity estimate
% there are two possibilities for Mathematics Subject classification: 
%\classification{00D00,11A11}
%or 

{\bf 2010 Classification}  42B25, 42B35

% these two lines will be added by the editor
%\received{April 1, 2004}
%\logo{44}{2004}{3}{1}{3}
%\doi{3445}

% let's define a new macro
%\newcommand{\abs}[1]{\lvert#1\rvert}

\begin{abstract}
The goal of this paper is to give the necessary and sufficient condition for Banach function spaces on which Young's inequality holds. 
As an application, we consider the maximal regularity estimate of heat equations for Besov spaces associated with Banach function spaces.
\end{abstract}

\section{Introduction}\label{intro}
In this paper, we consider 
%the necessary and sufficient condition of 
Young's inequality for convolution operator on Banach function spaces satysfying some conditions.
Moreover, we apply it to obtain maximal regularity estimates for Besov spaces associated with Banach function spaces.
Here and below, we assume that 
$(X, \|\cdot\|_{X})$ is a Banach space which is cointained in $L^0({\mathbb R}^n)$,
the linear space of all measurable functions.
(We only consider the function on ${\mathbb R}^n$. 
So we omit ${\mathbb R}^n$ for $X$.)
We consider following conditions for X:
\begin{enumerate}
\item[(L)]
(Lattice property)
For all $f \in X$ and $g \in L^0({\mathbb R}^n)$,
if $|g| \le |f|$ holds, then $g \in X$ and the inequality $\|g\|_{X} \le \|f\|_{X}$ holds. 

\item[(F)]
(Fatou property)
If $0 \le f_n \uparrow f$ for $(f_n)_{n \in {\mathbb N}}$ in $X$
and $\sup\limits_{n \in {\mathbb N}} \|f_n\|_{X}<\infty$,
then $f \in X$ and 
$\|f\|_{X}=\sup\limits_{n \in {\mathbb N}} \|f_n\|_{X}$.

\item[(Si)]
For any measurable set $E \subset {\mathbb R}^n$ with finite measure, 
%\begin{equation}\label{eq:250505-2}
$\chi_{E} \in X$.
%\end{equation}

\item[(BSi)]
For any ball $B$ in ${\mathbb R}^n$,
%\begin{equation}\label{eq:250506-1}
$\chi_{B} \in X$.
%\end{equation}

\item[(LI)]
For any measurable set $E \subset {\mathbb R}^n$ with finite measure and $f \in X$, 
%\begin{equation}\label{eq:250505-3}
$\displaystyle \int_{E} |f| \le C\|f\|_{X}$,
%\end{equation}
where the constant $C$ is independent of $f$.

\item[(BLI)]
For any ball $B$ in ${\mathbb R}^n$ and $f \in X$, 
%\begin{equation}\label{eq:250506-2}
$\displaystyle \int_{B} |f| \le C\|f\|_{X}$,
%\end{equation}
where the constant $C$ is independent of $f$.

\item[(Sa)]
(Sarturation property)
For every measurable subset $E$ of ${\mathbb R}^n$ with positive measure, there exists a measurable set $F \subset E$ with nonzero measure such that $\chi_F \in X$.
\end{enumerate}

\begin{definition}[Banach function spaces]\label{def:250506-1}\,
\begin{enumerate}
\item
A Banach space $X$ is a Banach function space
if $X$ has the properties (L), (F), (Si) and (LI).

\item
A Banach space $X$ is a ball Banach function space 
if $X$ has the properties (L), (F), (BSi) and (BLI).

\item
A Banach space $X$ is a sarturated Banach function space if $X$ has the properties (L), (F) and (Sa).
\end{enumerate}
\end{definition}

Note that, by the definition, Banach function spaces are ball Banach function spaces.
Actually, we can see that ball Banach function spaces are sarturated Banach function spaces, 
see Lemma \ref{lem:250812-1}. 

Bennet and Sharpley (\cite{BS}) adopt the first definition in Definition \ref{def:250506-1}.
They are mainly forcused on the rearrangement-invariant Banach function spaces.
In this setting, the property (Sa) is equivalent to (Si) and (LI).
For more detail, we refer to \cite[Remark 2.4]{LN24}.
%We give some example.
Lebesgue spaces, mixed Lebesgue spaces, Lorentz spaces, and Orlicz spaces are examples of Banach function spaces.
(See \cite{BS, SFH20-1}.)
However, the concept of Banach function spaces are a little restrictive.
For example, Morrey spaces are not Banach function spaces since the property (LI) fails.
To overcome this difficulty, Hakim and Sawano introduced the concept of ball Banach function spaces in \cite{HS16}.
See also \cite{SFH20-1}.
Morrey spaces and mixed Morrey spaces are example of ball Banach function spaces which are not Banach function spaces.
Meanwhile, sarturated Banach function spaces were appeared in \cite[Chapter 15]{Zaa}.
Lorist and Nieraeth gave the survey about sarturated Banch function spaces (\cite{LN24}).
In it, they consider sarturated (quasi) Banach function spaces without the Fatou property.
In this paper, we add the Fatou property since it is important to show the main theorem and applications.

%For more detailed, we refer to \cite{BS} and \cite{LN24}.

The main theorem of this paper is following. 
\begin{theorem}\label{main1}
%Suppose that $X$ is a ball Banach function space. %or a sarturated Banach function space.
\begin{enumerate}
\item[\rm (1)]
Suppose that $X$ is 
%a ball Banach function space or 
a sarturated Banach function space.
If we have 
\begin{align} \label{eq:250421-1}
\|f(\cdot-z)\|_{X} \lesssim \|f\|_{X}
\end{align}
for all $f \in X$ and $z \in {\mathbb R}^n$, 
then Young's inequality
\[
\|f*g\|_{X} \lesssim \|f\|_{X}\|g\|_{L^1}
\] 
holds for all $f \in X$ and $g \in L^1$.

\item[\rm (2)]
Suppose that $X$ is a ball Banach function space.
Let $f \in X$. If Young's inequality
\[
\|f*g\|_{X} \lesssim \|f\|_{X}\|g\|_{L^1}
\] 
holds for all $g \in L^1$,
then we have the condition (\ref{eq:250421-1}) 
for all $z \in {\mathbb R}^n$. 

\end{enumerate}
\end{theorem}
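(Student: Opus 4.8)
The plan is to prove the two implications by quite different means: a duality (associate-space) computation for (1), and an approximate-identity argument combined with the Fatou property for (2).

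For (1) I would first reduce to $f,g\ge 0$. Since $|f*g|\le |f|*|g|$ pointwise and $\|\,|f|\,\|_X=\|f\|_X$, the lattice property (L) shows it suffices to bound $\|\,|f|*|g|\,\|_X$. The engine of the proof is the norm formula coming from the associate space: as $X$ is saturated and enjoys the Fatou property (F), one has $X=X''$ isometrically, whence for every $0\le h\in L^0(\R^n)$,
\[
\|h\|_X=\sup\Big\{\int_{\R^n}h\,k \;:\; 0\le k,\ \|k\|_{X'}\le 1\Big\}.
\]
I would apply this to $h=|f|*|g|$, a well-defined $[0,\infty]$-valued function. Fixing an admissible $k$, Tonelli's theorem gives
\[
\int_{\R^n}(|f|*|g|)(x)\,k(x)\,dx=\int_{\R^n}|g|(y)\Big(\int_{\R^n}|f|(x-y)\,k(x)\,dx\Big)dy.
\]

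The inner integral is at most $\|\,|f|(\cdot-y)\,\|_X\,\|k\|_{X'}\le C\|f\|_X$ by Hölder's inequality for the pair $(X,X')$, the normalization $\|k\|_{X'}\le 1$, and --- crucially --- the translation bound \eqref{eq:250421-1}. Hence the double integral is at most $C\|f\|_X\int_{\R^n}|g|=C\|f\|_X\|g\|_{L^1}$, uniformly in $k$, and taking the supremum over $k$ gives $\|\,|f|*|g|\,\|_X\le C\|f\|_X\|g\|_{L^1}$; in particular $|f|*|g|<\infty$ a.e.\ and lies in $X$, and the signed or complex case follows by domination. Observe that the \emph{uniform} bound $\|\,|f|(\cdot-y)\,\|_X\le C\|f\|_X$ supplied by \eqref{eq:250421-1} lets me sidestep any issue of measurability or separability of $y\mapsto\|\,|f|(\cdot-y)\,\|_X$: the estimate on the inner integral holds for every $y$ with the same constant, so only the Fubini measurability of $y\mapsto\int_{\R^n}|f|(x-y)k(x)\,dx$ is invoked. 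This is precisely what makes the argument apply to all saturated Banach function spaces, including non-separable ones such as $L^\infty$.

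For (2) I would fix $z\in\R^n$ and test Young's inequality against an approximate identity centred at $z$. Pick $0\le\phi\in C_c(\R^n)$ with $\int_{\R^n}\phi=1$ and set $g_\eps(y)=\eps^{-n}\phi((y-z)/\eps)$, so that $g_\eps\in L^1$ with $\|g_\eps\|_{L^1}=1$ and the hypothesis yields the uniform bound $\|f*g_\eps\|_X\lesssim\|f\|_X$. Since $X$ is a ball Banach function space, (BLI) gives $\int_B|f|\le C_B\|f\|_X<\infty$ for every ball $B$, i.e.\ $f\in L^1_{\mathrm{loc}}$; therefore $f*g_\eps(x)=\int_{\R^n}f(x-z-\eps w)\phi(w)\,dw$ is well defined and tends to $f(x-z)$ for a.e.\ $x$ as $\eps\downarrow0$, by the standard a.e.\ convergence of mollifications. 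Finally the Fatou property (F) upgrades to lower semicontinuity under a.e.\ convergence: if $h_m\to h$ a.e.\ then applying (F) to $\inf_{j\ge m}|h_j|\uparrow|h|$ gives $\|h\|_X\le\liminf_m\|h_m\|_X$. Taking $h_m=f*g_{\eps_m}$ for a sequence $\eps_m\downarrow0$ produces $\|f(\cdot-z)\|_X\le\liminf_m\|f*g_{\eps_m}\|_X\lesssim\|f\|_X$, which is exactly \eqref{eq:250421-1}.

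The main obstacle is in (1): securing the isometric identification $X=X''$, equivalently the duality norm formula above, for a general saturated Banach function space with the Fatou property. For rearrangement-invariant spaces this is the classical Lorentz--Luxemburg theorem, but in the saturated setting one must verify that saturation yields a nondegenerate associate space $X'$ and that (F) still forces $X=X''$, for which I would lean on the framework of \cite{LN24}. Once this is granted the remainder of (1) is just Tonelli and Hölder, while (2) is comparatively soft, its only delicate point --- the passage to the limit --- being handled cleanly by the Fatou property.
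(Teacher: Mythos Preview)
Your proposal is correct and essentially mirrors the paper's argument. For (1) the paper applies Minkowski's inequality $\bigl\|\int f(\cdot-y)g(y)\,dy\bigr\|_X\le\int\|f(\cdot-y)\|_X|g(y)|\,dy$ and then the translation bound; that Minkowski inequality is itself established in the preliminaries via exactly the duality computation you carry out (H\"older against $k\in X'$, Tonelli, and the Lorentz--Luxemburg identification $X=X''$, which the paper records as Theorem~\ref{thm:250506-3}), so your argument is just the paper's with the Minkowski step inlined. For (2) the paper likewise tests Young's inequality against an approximate identity centred at $z$ (it uses box kernels $k^n\chi_{[0,1]^n}(k(\cdot-z))$ and the Lebesgue differentiation theorem rather than your smooth $\phi$ and mollifier convergence), invokes (BLI) to secure $f\in L^1_{\mathrm{loc}}$, and passes to the limit via the Fatou property --- identical in substance to what you wrote.
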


We organize the remaining part of this paper as follows.
Section \ref{pre} is devoted to the preparation of some ingredients from harmonic analysis.
In Section \ref{proof}, we prove Theorem \ref{main1} and give some remarks.
As an application, we give the maximal regularity estimate on Besov spaces associated with Banach function spaces in Section \ref{app}.

\section{Preliminaries}\label{pre}

\subsection{Banach function spaces}

In this subsction, we summarize the relation and the properties 
to Banach function spaces
in Section \ref{intro}.
Recall that the Banach space $X$ is contained in $L^0({\mathbb R}^n)$.
First, we give the equivalent conditions to (Sa).

\begin{lemma}[{\cite[Proposition 2.5]{LN24}}] \label{lem:250106-1}
Let $X$ be a Banach space with (L).
Then, the followings are equivalent.

{\rm (i)} $X$ satisfies (Sa).

{\rm (ii)} There is an increasing sequence $F_n \subset {\mathbb R}^n$
with $\chi_{F_n} \in X$ and 
${\mathbb R}^n=\bigcup\limits_{n=1}^{\infty} F_n$.

{\rm (iii)} There is a function $u \in X$ such that $u>0$ a.e.

{\rm (iv)} If $g \in L^0({\mathbb R}^n)$ with 
$\displaystyle \int |f(x)g(x)| {\rm d}x=0$ for all $f \in X$,
then $g=0$ a.e.
\end{lemma}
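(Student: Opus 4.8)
The plan is to prove the cyclic chain of implications (i) $\Rightarrow$ (ii) $\Rightarrow$ (iii) $\Rightarrow$ (iv) $\Rightarrow$ (i), relying throughout on two elementary consequences of (L) that I would record first. Writing $\mathcal{C}:=\{F\subset\mathbb{R}^n : \chi_F\in X\}$, the lattice property makes $\mathcal{C}$ stable under passing to measurable subsets and under \emph{finite} unions, since $\chi_{F_1\cup\cdots\cup F_m}\le\chi_{F_1}+\cdots+\chi_{F_m}$ and $X$ is a vector space; moreover, for any $f\in X$ and $\epsilon>0$ the superlevel set $\{|f|\ge\epsilon\}$ lies in $\mathcal{C}$, because $\chi_{\{|f|\ge\epsilon\}}\le\epsilon^{-1}|f|$ and $|f|\in X$. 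Because (F) is \emph{not} available here, I must never send a characteristic function of a countable union into $X$ directly; instead I combine finite unions with a supremum argument.

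For (i) $\Rightarrow$ (ii) I would fix an auxiliary weight $w\in L^1$ with $w>0$ a.e. (chosen independently of $X$, e.g.\ a Gaussian) and set $M:=\sup\{\int_F w : F\in\mathcal{C}\}\le\|w\|_{L^1}<\infty$. Taking $F_j\in\mathcal{C}$ with $\int_{F_j}w\to M$ and replacing them by the increasing finite unions $\hat F_n:=F_1\cup\cdots\cup F_n\in\mathcal{C}$, monotone convergence yields $\int_{\bigcup_n\hat F_n}w=M$. If $E:=\mathbb{R}^n\setminus\bigcup_n\hat F_n$ had positive measure, then (Sa) would furnish $F\subset E$ with $|F|>0$ and $F\in\mathcal{C}$, whence $\hat F_n\cup F\in\mathcal{C}$ and $\int_{\hat F_n\cup F}w=\int_{\hat F_n}w+\int_F w\to M+\int_F w>M$, contradicting the definition of $M$. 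Hence $\bigcup_n\hat F_n=\mathbb{R}^n$ up to a null set, which is exactly (ii).

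For (ii) $\Rightarrow$ (iii), given increasing $F_n\in\mathcal{C}$ covering $\mathbb{R}^n$, I put $u:=\sum_n c_n\chi_{F_n}$ with $c_n:=2^{-n}(1+\|\chi_{F_n}\|_{X})^{-1}$. The partial sums are Cauchy in $X$ (their tails are bounded by $\sum 2^{-n}$), so by completeness they converge in norm to some element of $X$; since norm convergence in $X$ implies convergence in measure on finite-measure sets via the continuous embedding $X\hookrightarrow L^0$, this limit coincides a.e.\ with the pointwise sum, which is strictly positive since each point lies in some $F_n$. This produces $u\in X$ with $u>0$ a.e. The final two steps are short. For (iii) $\Rightarrow$ (iv), I test the hypothesis of (iv) against $f=u\in X$: then $\int|ug|=0$ with $u>0$ a.e.\ forces $g=0$ a.e. For (iv) $\Rightarrow$ (i), I argue by contraposition: if some $E$ with $|E|>0$ admits no positive-measure subset in $\mathcal{C}$, then every $f\in X$ must vanish a.e.\ on $E$ (else a superlevel set $\{|f|\ge\epsilon\}\cap E$ would be such a subset), so $g=\chi_E$ satisfies $\int|fg|=0$ for all $f\in X$ while $g\neq 0$, contradicting (iv).

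The main obstacle is the absence of the Fatou property: the tempting move of declaring $\chi_{\bigcup_n F_n}\in X$ is forbidden, which is precisely why (i) $\Rightarrow$ (ii) is routed through a bounded supremum against an $L^1$-weight using only finite unions, and why in (ii) $\Rightarrow$ (iii) the identification of the norm limit with the a.e.\ positive pointwise sum must be justified through the continuous embedding $X\hookrightarrow L^0$ rather than by a monotone convergence argument. I expect these two measure-theoretic identifications to be the only delicate points, with the remaining implications being routine bookkeeping.
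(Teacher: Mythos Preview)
The paper does not supply its own proof of this lemma; it merely cites \cite[Proposition 2.5]{LN24}. Your cyclic argument (i)$\Rightarrow$(ii)$\Rightarrow$(iii)$\Rightarrow$(iv)$\Rightarrow$(i) is correct and is essentially the standard route taken in that reference, so there is nothing to contrast.

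One remark on the step you yourself flag as delicate. In (ii)$\Rightarrow$(iii) you invoke the continuous embedding $X\hookrightarrow L^0$ to identify the norm limit $v$ of the partial sums with the pointwise sum $u$. This embedding is indeed a consequence of completeness together with (L) alone, but since it is not listed among the paper's axioms it deserves a line of justification: if $\|g_k\|_X\le 4^{-k}$, the partial sums $S_N=\sum_{k\le N}2^k|g_k|$ converge in $X$ to some $h$; then for each fixed $k$ one has $(2^k|g_k|-h)_+\le|S_N-h|$ for $N\ge k$, so $\|(2^k|g_k|-h)_+\|_X=0$, whence $|g_k|\le 2^{-k}h\to 0$ a.e. This yields a.e.\ convergence along fast subsequences and hence local convergence in measure in general. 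Alternatively, you can bypass the embedding entirely by observing that, with the $F_n$ increasing, $u_M\chi_{F_N}=\bigl(u_N+\sum_{N<n\le M}c_n\bigr)\chi_{F_N}$, so $u_M\chi_{F_N}$ converges \emph{in $X$} to $u\chi_{F_N}$ and simultaneously to $v\chi_{F_N}$, giving $v=u$ on each $F_N$ directly.
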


By Lemma \ref{lem:250106-1}, we have the following relation between ball Banach function spaces and sarturated Banach function spaces.
(See also \cite[Remark 2.6]{Nie23}.)

\begin{lemma} \label{lem:250812-1}
The condition (BSi) implies the condition (Sa).
In particular, if $X$ is a ball Banach function space, then $X$ is a sarturted Banach function space.
\end{lemma}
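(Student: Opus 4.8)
The plan is, given a measurable set $E \subset {\mathbb R}^n$ with $|E|>0$, to exhibit a subset $F \subseteq E$ of positive measure with $\chi_F \in X$, which is exactly what (Sa) demands, and to extract this directly from (BSi) and the lattice property (L). The key observation is that although $E$ itself may have infinite measure and need not be contained in any single ball, it must meet some ball in a set of positive measure; on that ball (BSi) together with (L) furnishes the required membership in $X$.

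Concretely, I would fix the increasing sequence of balls $B_k = B(0,k)$ centered at the origin. Since $E = \bigcup_{k=1}^{\infty}(E \cap B_k)$ is an increasing union, continuity of Lebesgue measure from below gives $|E| = \lim_{k \to \infty}|E \cap B_k|$. Because $|E|>0$, there is some index $k$ with $|E \cap B_k|>0$. I then set $F := E \cap B_k$, so that $F \subseteq E$, $0 < |F| \le |B_k| < \infty$, and $|\chi_F| \le \chi_{B_k}$ holds pointwise.

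It remains to pass from $\chi_{B_k} \in X$ to $\chi_F \in X$. By (BSi) we have $\chi_{B_k} \in X$, and then the lattice property (L), applied to the pointwise bound $|\chi_F| \le |\chi_{B_k}|$, yields $\chi_F \in X$ with $\|\chi_F\|_X \le \|\chi_{B_k}\|_X$. This establishes (Sa). For the final assertion, recall that a ball Banach function space satisfies (L), (F), (BSi) and (BLI); having just shown that (BSi) implies (Sa), such a space satisfies (L), (F) and (Sa), which is precisely the definition of a saturated Banach function space.

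I do not anticipate a genuine obstacle here: the only point requiring a little care is that one cannot simply take $F = E$, since $E$ may fail to lie in any ball and $\chi_E$ need not belong to $X$. The exhaustion of ${\mathbb R}^n$ by balls is exactly what repairs this, reducing the problem to the single-ball situation where (BSi) applies. Note also that the Fatou property (F) plays no role in this particular implication.
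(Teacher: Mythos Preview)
Your proof is correct and follows essentially the same idea the paper has in mind: the paper states this lemma immediately after Lemma~\ref{lem:250106-1} and indicates that it is a consequence of that lemma, the intended argument being that the balls $B(0,k)$ form an increasing exhaustion of ${\mathbb R}^n$ with $\chi_{B(0,k)}\in X$ by (BSi), so condition~(ii) of Lemma~\ref{lem:250106-1} is satisfied and hence (Sa) holds. Your argument simply unpacks the implication (ii)$\Rightarrow$(i) of Lemma~\ref{lem:250106-1} for this particular exhaustion, intersecting $E$ with a large enough ball and using (L); the content is the same.
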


Next, we recall the K\"othe dual.

\begin{definition}
Let $X$ be a Banach space with (L).
We define the K\"othe dual or associate space $X'$ of $X$
as the space 
\[
X'=\{g \in L^0({\mathbb R}^n): 
fg \in L^1({\mathbb R}^n) \mbox{ for all } f \in X\}.
\]
For $g \in X'$, we define the associate norm as
\[
\|g\|_{X'}=\sup\left\{\int_{{\mathbb R}^n} |f(x)g(x)| {\rm d}x
: f \in X, \|f\|_{X} \le 1 \right\}.
\]
\end{definition}

Actually, from this definition, we only show that $\|\cdot\|_{X'}$ is semi-norm (that is, there is a function $g \in X'$ satisfying $g \neq 0$ a.e. such that $\|g\|_{X'}=0$).
The next proposition suggests 
that the sarturation property is important for the duality.

\begin{proposition}[{\cite[Proposition 2.3]{Nie23}, \cite[Proposition 2.5]{LN24}}]
Let $X$ be a Banach space with (L).
Then, the semi-norm $\|\cdot\|_{X'}$ on $X'$ is norm  if and only if the property (Sa) holds for $X$.
\end{proposition}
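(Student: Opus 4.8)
The plan is to recognize that the assertion ``$\|\cdot\|_{X'}$ is a norm'' is, after a short homogeneity argument, precisely condition (iv) of Lemma \ref{lem:250106-1}, and then to invoke the equivalence (i)$\Leftrightarrow$(iv) already established there. So the whole proposition becomes a repackaging of that lemma, and I would organize the argument around a single reduction step followed by two one-line implications.

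First I would record the key reduction: for $g \in X'$ the quantity $\|g\|_{X'}$ vanishes if and only if $\int_{\mathbb{R}^n} |f(x)g(x)|\,{\rm d}x = 0$ for every $f \in X$. Indeed, $\|g\|_{X'}=0$ says the integral vanishes for all $f$ in the unit ball of $X$; for an arbitrary nonzero $f \in X$ one applies this to $f/\|f\|_X$ and uses homogeneity of the integral to conclude $\int |fg| = 0$. Conversely, if $\int |fg| = 0$ for all $f \in X$, then in particular $fg \in L^1$ for every $f \in X$, so $g \in X'$ and the supremum defining $\|g\|_{X'}$ is zero. Thus the semi-norm separates points of $X'$ exactly when condition (iv) of Lemma \ref{lem:250106-1} holds, namely when ``$\int|fg|=0$ for all $f\in X$'' forces $g=0$ a.e.

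With this reduction in place both implications are immediate. If (Sa) holds, then by Lemma \ref{lem:250106-1} condition (iv) holds, so $\|g\|_{X'}=0$ forces $g=0$ a.e., i.e.\ $\|\cdot\|_{X'}$ is a genuine norm. Conversely, if $\|\cdot\|_{X'}$ is a norm and $g \in L^0(\mathbb{R}^n)$ satisfies $\int |fg|=0$ for all $f \in X$, then the reduction gives $g \in X'$ with $\|g\|_{X'}=0$, whence $g=0$ a.e.; this is exactly condition (iv), and Lemma \ref{lem:250106-1} then yields (Sa).

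I do not expect a serious obstacle: the mathematical content is carried entirely by Lemma \ref{lem:250106-1}. The only points requiring care are the homogeneity step that converts the supremum definition of $\|g\|_{X'}$ into the pointwise-integral statement (iv), and the routine observation that any $g$ whose product with every $f\in X$ integrates to zero automatically lies in $X'$. Both are elementary, so the proof is short once the reduction is stated cleanly.
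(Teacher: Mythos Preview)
The paper does not supply its own proof of this proposition; it is stated with a citation to \cite[Proposition 2.3]{Nie23} and \cite[Proposition 2.5]{LN24} and no argument follows. Your proposal therefore cannot be compared against a proof in the paper, but it is correct on its own terms: the reduction of ``$\|g\|_{X'}=0$'' to ``$\int|fg|=0$ for all $f\in X$'' is sound, and the observation that any $g\in L^0$ satisfying the latter automatically lies in $X'$ closes the gap between the norm property (a statement about $g\in X'$) and condition (iv) of Lemma~\ref{lem:250106-1} (a statement about $g\in L^0$). One minor caution: Lemma~\ref{lem:250106-1} and the proposition are both attributed to \cite[Proposition 2.5]{LN24}, so in the original source they may well be parts of a single result rather than independent statements; within the logical structure of this paper, however, invoking the lemma is legitimate and your derivation is clean.
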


The following is the relation to local integrability and K\"othe dual.

\begin{lemma}[{\cite[Remark 2.6]{Nie23}}] \label{lem:250702-1}
The properties (Si) and (LI) are eqivalent to (Si) for $X$ and $X'$, i.e. for all measurable set $E \subset {\mathbb R}^n$ with positive measure, $\chi_E \in X$ and $\chi_E \in X'$. 
\end{lemma}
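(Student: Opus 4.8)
The plan is to reduce the claimed equivalence to the single observation that, for a fixed measurable set $E$ of finite measure, membership $\chi_E \in X'$ with finite associate norm is precisely condition (LI) read off at $E$. Since the condition (Si) for $X$ occurs verbatim on both sides of the asserted equivalence, it suffices to prove that (LI) for $X$ is equivalent to (Si) for $X'$, that is, to $\chi_E \in X'$ for every finite-measure set $E$.

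For the direction (LI) $\Rightarrow$ (Si for $X'$), I would fix $E$ and use (LI) to obtain $\int_E |f| \le C\|f\|_X$ with $C$ independent of $f$; this forces $f\chi_E \in L^1$ for every $f \in X$, so $\chi_E \in X'$, and moreover $\|\chi_E\|_{X'} = \sup_{\|f\|_X \le 1}\int_E |f| \le C < \infty$. For the converse I would start from $\chi_E \in X'$, hence $\|\chi_E\|_{X'} < \infty$, and invoke the H\"older-type inequality $\int |fg| \le \|f\|_X\|g\|_{X'}$, which is immediate from the very definition of the associate norm together with homogeneity, applied to $g = \chi_E$. This yields $\int_E |f| \le \|\chi_E\|_{X'}\|f\|_X$ for all $f$, which is exactly (LI) at $E$ with constant $C = \|\chi_E\|_{X'}$.

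The one point requiring genuine input, and which I expect to be the main (if mild) obstacle, is the implicit claim that membership $\chi_E \in X'$, defined by $f\chi_E \in L^1$ for all $f \in X$, already forces $\|\chi_E\|_{X'}$ to be finite; only then does the converse deliver a constant independent of $f$. I would settle this by the standard gliding-hump/completeness argument: if $\|\chi_E\|_{X'} = \infty$, choose $f_n \in X$ with $\|f_n\|_X \le 1$ and $\int_E |f_n| \ge 4^n$, and set $f = \sum_n 2^{-n}|f_n|$; by (L), (F) and completeness of $X$ the increasing partial sums converge to an element $f \in X$ with $\|f\|_X \le 1$, yet $\int_E |f| \ge 2^{-n}\int_E |f_n| \ge 2^n$ for every $n$, so $f\chi_E \notin L^1$, contradicting $\chi_E \in X'$. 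Assembling the two implications over all finite-measure $E$ gives (LI) $\Leftrightarrow$ (Si for $X'$), and conjoining with the common condition (Si) for $X$ yields the lemma; the phrasing ``positive measure'' in the statement is to be read as ``finite measure'', consistent with the definitions of (Si) and (LI).
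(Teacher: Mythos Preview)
The paper does not give its own proof of this lemma; it is stated as a citation to \cite[Remark 2.6]{Nie23}. Your argument is correct and is the standard one: since (Si) for $X$ appears on both sides, the content is that (LI) for $X$ is equivalent to (Si) for $X'$, and both directions follow from the definition of the associate norm once one knows that $g\in X'$ forces $\|g\|_{X'}<\infty$. Your gliding-hump construction, using (L) and (F) to push the increasing partial sums $\sum_{n\le N}2^{-n}|f_n|$ into $X$, is exactly the classical device for this finiteness (compare the proof that $X'$ is complete in \cite[Chapter~1, Theorem~2.2]{BS}). Your closing remark that ``positive measure'' should be read as ``finite measure'' to match the definitions of (Si) and (LI) is also correct.
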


Actually, the K\"othe dual of $X$ has the same property as $X$.

\begin{lemma}[{\cite[Theorem 3.2]{LN24}}]
Let $X$ be one of the Banach function spaces in Definition \ref{def:250506-1}.
Then, $X'$ is also the same type Banach function space.
\end{lemma}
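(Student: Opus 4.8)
The plan is to treat the three notions of Banach function space in parallel, separating the two properties that every associate space enjoys---(L) and (F)---from the size/integrability conditions that distinguish the three types. Throughout I use that the definition of the associate norm gives H\"older's inequality $\int_{\R^n}|fg|\le\|f\|_X\|g\|_{X'}$ for $f\in X$ and $g\in X'$.

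First I would record the properties common to all three cases. The lattice property (L) for $X'$ is immediate: if $|h|\le|g|$ with $g\in X'$, then $\int|fh|\le\int|fg|$ for every $f\in X$, so $h\in X'$ and $\|h\|_{X'}\le\|g\|_{X'}$. For the Fatou property (F), let $0\le g_m\uparrow g$ with $g_m\in X'$ and $\sup_m\|g_m\|_{X'}<\infty$; for each $f\in X$ the monotone convergence theorem gives $\int|f|g=\lim_m\int|f|g_m\le\|f\|_X\sup_m\|g_m\|_{X'}$, whence $fg\in L^1$, so $g\in X'$, and taking the supremum over $\|f\|_X\le1$ together with (L) yields $\|g\|_{X'}=\sup_m\|g_m\|_{X'}$. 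Completeness of $(X',\|\cdot\|_{X'})$ is the standard Riesz--Fischer argument for associate spaces and follows from (L) and (F). Finally, each of the three types satisfies (Sa)---a Banach function space and a ball Banach function space do so by (Si) (resp.\ by (BSi) together with Lemma \ref{lem:250812-1})---so the preceding Proposition guarantees that $\|\cdot\|_{X'}$ is a genuine norm, not merely a semi-norm. Thus in every case $X'$ is a Banach space satisfying (L) and (F).

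Next I would settle the distinguishing conditions in the two locally integrable cases, where the arguments are short and dual to one another. Suppose $X$ is a Banach function space. To obtain (LI) for $X'$, fix $E$ of finite measure; then $\chi_E\in X$ by (Si), so $\int_E|g|=\int_{\R^n}|\chi_E\,g|\le\|\chi_E\|_X\,\|g\|_{X'}$ by H\"older, giving (LI) with constant $\|\chi_E\|_X$. For (Si) of $X'$, the condition (LI) of $X$ gives $\int_E|f|\le C\|f\|_X$ for all $f\in X$, which says exactly that $\chi_E\in X'$ with $\|\chi_E\|_{X'}\le C$. The ball Banach function space case is identical after replacing every finite-measure set $E$ by a ball $B$ and the pair (Si),(LI) by (BSi),(BLI): (BLI) for $X'$ comes from $\chi_B\in X$ via H\"older, and (BSi) for $X'$ is the statement that (BLI) of $X$ makes $\chi_B\in X'$.

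Finally, the saturated case, where I expect the main obstacle. Here $X$ carries neither (Si)/(BSi) nor (LI)/(BLI), so characteristic functions of finite-measure sets need not belong to $X'$ and the short arguments above are unavailable; one must instead exhibit a strictly positive element of $X'$ and invoke the equivalence (i)$\Leftrightarrow$(iii) of Lemma \ref{lem:250106-1}. The plan is as follows. Using (ii) of Lemma \ref{lem:250106-1} fix an increasing sequence $F_k\uparrow\R^n$ with $\chi_{F_k}\in X$, and, intersecting with balls and using (L), arrange that each $F_k$ has finite measure. Restricting to $F_k$ produces a saturated Banach function space $X_k=\{f\chi_{F_k}:f\in X\}$ over the finite-measure space $F_k$ which contains the constant function, inherits the Fatou property, and whose associate elements extend by zero into $X'$ with no increase of norm. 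For each $k$ one constructs a function $g_k\in X'$, supported in $F_k$, with $g_k>0$ a.e.\ on $F_k$ and $\|g_k\|_{X'}\le1$; then $v:=\sum_{k}2^{-k}g_k$ is strictly positive a.e.\ on $\bigcup_kF_k=\R^n$ and, by (L), (F) and completeness of $X'$, lies in $X'$, so $X'$ is saturated. The delicate point---and the heart of \cite[Theorem 3.2]{LN24}---is the construction of each $g_k$: in the absence of local integrability it cannot be taken to be a characteristic function, since unit-ball elements of $X_k$ may fail to be integrable on every positive-measure subset, and one must instead produce a genuinely weight-like positive function, which is exactly where the Fatou property (through the Lorentz--Luxemburg duality principle) is used.
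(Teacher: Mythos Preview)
The paper does not prove this lemma; it is quoted without proof from \cite[Theorem 3.2]{LN24}, so there is no in-paper argument to compare your attempt against.

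Assessing your proof on its own merits: the treatment of (L), (F), completeness, and the two locally-integrable cases (Banach function space and ball Banach function space) is correct and entirely standard. In the saturated case, however, you have only outlined a strategy and then explicitly deferred the key step---the construction of the strictly positive functions $g_k\in X'$---back to \cite[Theorem 3.2]{LN24}. That is circular: you are invoking the very theorem you set out to prove.

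There is a much shorter route, using only facts already recorded in the paper, that bypasses any explicit construction. You have already shown that $X'$ is a Banach space satisfying (L). Now apply the Proposition immediately preceding the lemma (the one asserting that $\|\cdot\|_{X'}$ is a norm if and only if $X$ has (Sa)) with $X'$ in the role of $X$: the semi-norm $\|\cdot\|_{(X')'}=\|\cdot\|_{X''}$ is a genuine norm if and only if $X'$ satisfies (Sa). But the Lorentz--Luxemburg theorem (Theorem~\ref{thm:250506-3}), which applies because $X$ has (L), (F) and (Sa), gives $X''=X$ with coincidence of norms; in particular $\|\cdot\|_{X''}=\|\cdot\|_X$ is a norm. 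Hence $X'$ has (Sa). This is precisely where the Fatou property enters, and it replaces the entire weight-construction programme you sketched.
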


The following lemma is very important to apply the duality arguments.
The proof  is in 
\cite[Theorem 2.7]{BS} and \cite[Theorem 71.1]{Zaa}.

\begin{theorem}[Lorentz--Luxemburg theorem]\label{thm:250506-3}
Let $X$ be 
%one of 
the Banach space with (L) and (Sa).
Then, $X$ satisfies the Fatou property (F) if and only if $X''=X$ with norm coincidence.
\end{theorem}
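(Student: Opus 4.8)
The plan is to treat the two implications separately, relying throughout on the elementary inclusion $X\subseteq X''$ with $\|f\|_{X''}\le\|f\|_X$, which holds for any $X$ with (L) and (Sa): for $f\in X$ and $g\in X'$ with $\|g\|_{X'}\le1$ the definition of the associate norm gives $\int_{\mathbb{R}^n}|fg|\le\|f\|_X\|g\|_{X'}\le\|f\|_X$, and taking the supremum over such $g$ yields $f\in X''$ with $\|f\|_{X''}\le\|f\|_X$. Consequently the only content is the reverse inclusion $X''\subseteq X$ together with $\|f\|_X\le\|f\|_{X''}$.

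For the direction ``$X''=X$ with equal norms $\Rightarrow$ (F)'', I would note that an associate space is automatically Fatou: if $0\le f_n\uparrow f$ in $X''$ with $\sup_n\|f_n\|_{X''}<\infty$, then for each $g\in X'$ with $\|g\|_{X'}\le1$ monotone convergence gives $\int|fg|=\sup_n\int|f_ng|\le\sup_n\|f_n\|_{X''}$; the supremum over $g$ gives $\|f\|_{X''}\le\sup_n\|f_n\|_{X''}$, and the lattice property of $X''$ supplies the reverse inequality. Hence $X''$ has (F), and if $X=X''$ isometrically then $X$ inherits (F).

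The substantial direction is ``(F) $\Rightarrow X''=X$ with norm coincidence'', which I would argue in two stages. First, using that (Sa) makes $\langle X,X'\rangle$ a separating dual pairing (Lemma \ref{lem:250106-1} together with the fact that $\|\cdot\|_{X'}$ is then a genuine norm), I would establish that the Fatou property forces the closed unit ball $B_X=\{f\in X:\|f\|_X\le1\}$ to be closed in the weak topology $\sigma(X,X')$. Granting this, the bipolar theorem for the pairing $\langle X,X'\rangle$ identifies $B_X$ with its bipolar $\{f\in X:\|f\|_{X''}\le1\}$, because the polar of $B_X$ is exactly the $X'$-unit ball; this yields $\|f\|_X=\|f\|_{X''}$ for every $f\in X$. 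Second, to reach elements of $X''$ not a priori in $X$, I would take $0\le f\in X''$, invoke Lemma \ref{lem:250106-1}(ii) to fix an exhaustion $F_m\uparrow\mathbb{R}^n$ with $\chi_{F_m}\in X$, and set $f_m:=\min(f,m)\,\chi_{F_m}$. Each $f_m\le m\,\chi_{F_m}$ lies in $X$ by (L), so the first stage gives $\|f_m\|_X=\|f_m\|_{X''}\le\|f\|_{X''}$; since $0\le f_m\uparrow f$, the assumed property (F) then forces $f\in X$ with $\|f\|_X=\sup_m\|f_m\|_X\le\|f\|_{X''}$. Combined with the trivial reverse inequality this gives $X''=X$ with coincident norms.

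The main obstacle is the first stage: proving that (F) makes $B_X$ closed in $\sigma(X,X')$, equivalently that $X'$ is a \emph{norming} subspace recovering the norm of $X$. The difficulty is that a generic norming functional produced by Hahn--Banach need not be represented by an element of $X'$ (the purely finitely additive, order-discontinuous part of $X^*$ is invisible to the pairing with $X'$), so one cannot simply separate inside $X^*$. The way through is to use (F) directly: for $f\in X$ with $\|f\|_X>1$ one must \emph{construct} a resonant $g\in X'$ with $\int|fg|>\|g\|_{X'}$, building $g$ as a monotone a.e.-limit of admissible dual functions and passing to the limit by monotone convergence, the Fatou property guaranteeing that no norm is lost in the limit. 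I expect this resonance construction to be the only genuinely delicate point; everything else is bookkeeping with the lattice property, saturation, and the two monotone convergence arguments above.
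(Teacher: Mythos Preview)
The paper does not give its own proof of this statement; it simply cites \cite[Theorem~2.7]{BS} and \cite[Theorem~71.1]{Zaa}. So there is no in-paper argument against which to compare your proposal.

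Judged on its own, your outline is the standard one and three of its four ingredients are fine: the trivial inclusion $X\hookrightarrow X''$ with $\|\cdot\|_{X''}\le\|\cdot\|_X$; the observation that any associate space automatically enjoys (F), which settles the implication $X''=X\Rightarrow\text{(F)}$; and the exhaustion step via Lemma~\ref{lem:250106-1}(ii), which correctly reduces the inclusion $X''\subset X$ to the norming statement $\|f\|_X=\|f\|_{X''}$ for $f$ already in $X$. The gap is exactly where you place it yourself. You have not actually produced, for a given $f\in X$ with $\|f\|_X>1$, a function $g\in X'$ with $\int|fg|>\|g\|_{X'}$; the sentence ``build $g$ as a monotone a.e.-limit of admissible dual functions, with (F) guaranteeing that no norm is lost'' is a hope, not an argument. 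Note also that the bipolar packaging is redundant: the resonance construction you describe \emph{is} already a direct proof that $\|f\|_{X''}>1$, i.e.\ of the norming property, so routing through $\sigma(X,X')$-closedness of $B_X$ and then invoking the bipolar theorem only restates the same conclusion in different language. The entire substance of the Lorentz--Luxemburg theorem lives in this one step, and it is precisely what the cited references \cite{BS,Zaa} supply; until that construction is written out, the implication $\text{(F)}\Rightarrow X''=X$ remains unproved.
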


Finally, if we have the boundedness of the Hardy--Littlewood maximal operator on $X$, the sarturation property leads to 
the properties (BLI) and (BSi).

\begin{lemma}[{\cite[Lemma 2.26]{Nie23}}] \label{lem:250702-2}
Let $X$ be a sarturated Banach function space.
Assume that the Hardy--Littlewood maximal operator $M$ is bounded on $X$, then $\chi_B \in X$ and $\chi_B \in X'$.
\end{lemma}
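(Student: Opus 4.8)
The plan is to prove the two membership claims in order: first $\chi_B \in X$, and then to deduce $\chi_B \in X'$ from it. I note at the outset that $\chi_B \in X'$ is equivalent to the property (BLI) for $X$: by the definition of the associate norm, $\|\chi_B\|_{X'} = \sup\{\int_B |f| : f \in X,\ \|f\|_X \le 1\}$, so finiteness of this supremum is exactly an estimate of the form $\int_B |f| \le C\|f\|_X$. Thus the second half amounts to producing such a constant $C$.

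For $\chi_B \in X$, I would first invoke the saturation property (Sa) to obtain a measurable set $F$ of positive measure with $\chi_F \in X$; intersecting $F$ with a large ball and using the lattice property (L), I may assume $F \subset B(0,r_0)$ is bounded while keeping $|F| > 0$. The key computation is a uniform lower bound for the maximal function on the target ball $B = B(y,R)$: for every $x \in B$ the ball centered at $x$ of radius $\rho = |y| + R + r_0$ contains $F$, whence $M\chi_F(x) \ge |F|/(c_n \rho^n) =: \delta > 0$, a constant independent of $x \in B$. This yields the pointwise inequality $\chi_B \le \delta^{-1} M\chi_F$. Since $\chi_F \in X$ and $M$ is bounded on $X$, we have $M\chi_F \in X$, and the lattice property (L) then forces $\chi_B \in X$.

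For $\chi_B \in X'$, I would exploit the elementary fact that for any $f \in X$ and any $x \in B$, using $B$ itself as an averaging ball gives $Mf(x) \ge \frac{1}{|B|}\int_B |f|$. Multiplying by $\chi_B$ and taking the $X$-norm (monotone by (L)) yields $\frac{1}{|B|}\bigl(\int_B |f|\bigr)\|\chi_B\|_X \le \|Mf\|_X \lesssim \|f\|_X$, where the last step is the assumed boundedness of $M$ on $X$. Because $\chi_B \in X$ from the previous step and $\|\chi_B\|_X > 0$ (as $\chi_B \neq 0$ a.e. and $X$ embeds in $L^0$), I may divide and obtain $\int_B |f| \le C\|f\|_X$ with $C = |B|\,\|M\|_{X \to X}/\|\chi_B\|_X$. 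This is precisely (BLI), hence $\chi_B \in X'$.

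The only genuinely delicate point is the first step: one must manufacture a bounded set of positive measure lying in $X$ (which is where saturation, rather than mere (L) and (F), is indispensable) and then control the decay of its maximal function at infinity so as to produce a lower bound that is uniform over the whole of the fixed ball $B$. The second step, by contrast, is a soft duality argument that needs only the trivial lower bound of $Mf$ by its own average together with the membership $\chi_B \in X$ already established.
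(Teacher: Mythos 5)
Your proof is correct, and it is essentially the same argument as the one behind the paper's statement (the paper gives no inline proof but quotes \cite[Lemma 2.26]{Nie23}, whose proof likewise truncates a saturating set to a bounded set $F$, bounds $M\chi_F$ from below uniformly on $B$ to get $\chi_B \in X$ via the lattice property, and then uses the pointwise bound $\frac{1}{|B|}\bigl(\int_B |f|\bigr)\chi_B \le Mf$ together with $0 < \|\chi_B\|_X < \infty$ to obtain (BLI), i.e.\ $\chi_B \in X'$). Both of your steps, including the division by $\|\chi_B\|_X$, are sound; at most one might add the routine remark that $\int_B|f|<\infty$ is forced (e.g.\ by applying the same estimate to the truncations $\min(|f|,N)$ and letting $N\to\infty$), since otherwise $Mf$ would be infinite on a set of positive measure, contradicting $Mf \in X \subset L^0$.
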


Note that combining Lemma \ref{lem:250702-1} and \ref{lem:250702-2}, 
we have the following lemma.

\begin{lemma}
Let $X$ be a sarturated Banach function space.
Assume that the Hardy--Littlewood maximal operator $M$ is bounded on $X$, then $X$ is a ball Banach function space.
In particular, if we have the boundedness of the Hardy--Littlewood maximal operator, then the concept of ball Banach function spaces is same as that of sarturated Banach function spaces.
\end{lemma}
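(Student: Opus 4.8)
The plan is to verify directly the two axioms that a saturated Banach function space does not yet guarantee, namely (BSi) and (BLI); since a saturated Banach function space already has (L) and (F), once (BSi) and (BLI) are in hand, $X$ is a ball Banach function space by Definition \ref{def:250506-1}. The whole argument rests on Lemma \ref{lem:250702-2}, which, under the boundedness of $M$ on $X$, already yields $\chi_B \in X$ and $\chi_B \in X'$ for every ball $B$. The first inclusion is precisely the statement (BSi), so only (BLI) requires an argument.

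For (BLI) I would fix a ball $B$ and an arbitrary $f \in X$ and estimate
\[
\int_B |f| = \int_{{\mathbb R}^n} |f(x)| \chi_B(x)\,{\rm d}x \le \|f\|_{X}\,\|\chi_B\|_{X'},
\]
where the inequality is the H\"older inequality for the K\"othe duality: it is immediate from the definition of the associate norm $\|\cdot\|_{X'}$ applied to $f/\|f\|_{X}$ (and trivial when $f=0$). Since $\chi_B \in X'$ by Lemma \ref{lem:250702-2}, the quantity $C:=\|\chi_B\|_{X'}$ is finite, and although it depends on the ball $B$, it is independent of $f$, which is exactly what (BLI) demands. This establishes that $X$ has (L), (F), (BSi) and (BLI), hence that $X$ is a ball Banach function space.

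For the ``in particular'' clause I would combine the above with the converse inclusion, which requires no hypothesis on $M$: by Lemma \ref{lem:250812-1}, every ball Banach function space is a saturated Banach function space. Together with the first part, this shows that, under the standing assumption that $M$ is bounded on $X$, the two classes coincide, a saturated Banach function space being then automatically a ball Banach function space and conversely.

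There is no serious obstacle once Lemma \ref{lem:250702-2} is granted; the substance of the proof is really the passage from $\chi_B \in X \cap X'$ to (BSi) and (BLI), which is the ball analogue of the reasoning behind Lemma \ref{lem:250702-1}. The only points requiring care are that the K\"othe--H\"older inequality follows solely from the \emph{definition} of the associate norm, so that no additional property of $X'$ is invoked, and that the constant appearing in (BLI) is permitted to depend on the ball $B$ while remaining uniform in $f$.
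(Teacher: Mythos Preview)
Your argument is correct and is essentially the paper's own proof spelled out in detail: the paper merely writes ``combining Lemma \ref{lem:250702-1} and Lemma \ref{lem:250702-2}'' before stating the lemma, and your H\"older step $\int_B|f|\le\|f\|_X\|\chi_B\|_{X'}$ is precisely the ball analogue of Lemma \ref{lem:250702-1} that this combination is meant to invoke. Your handling of the ``in particular'' clause via Lemma \ref{lem:250812-1} is also what the paper intends.
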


To consider the Young inequality, we recall the Minkowski inequality.
Let $f(x,y)$ be a function such that for almost all fixed $y$,
$f \in X$ as a function of $x$,
the function $\|f(\cdot, y)\|_{X}$ is measurable,
and 
\[
\int \|f(\cdot, y)\|_{X} {\rm d}y<\infty.
\] 
Then, we have
\begin{align} \label{eq:250812-2}
\left\|
\int_{{\mathbb R}^n} f(\cdot, y)
{\rm d}y
\right\|_{X}
\le
\int_{{\mathbb R}^n}
\left\|
 f(\cdot, y)
\right\|_{X}
{\rm d}y.
\end{align}

Actually, we can generalize
the Minkowski inequality
in some sence: 
\[
\left\|
\int_{{\mathbb R}^n} f(\cdot, y)
{\rm d}y
\right\|_{X''}
\le
\int_{{\mathbb R}^n}
\left\|
 f(\cdot, y)
\right\|_{X}
{\rm d}y.
\]
This fact is mentioned in \cite[p.45-46]{KPS}.
For the completeness, we give the proof.
By the definition of K\"othe dual and Fubini's theorem, we have
\begin{align*}
\left\|
\int_{{\mathbb R}^n} f(\cdot, y)
{\rm d}y
\right\|_{X''}
&=
\sup_{g \in X', \|g\|_{X'}\le 1}
\int_{{\mathbb R}^n}
\left(\int_{{\mathbb R}^n} f(x,y) {\rm d}y\right) 
g(x) {\rm d}x\\
&=
\sup_{g \in X', \|g\|_{X'}\le 1}
\int_{{\mathbb R}^n}
\left(\int_{{\mathbb R}^n} f(x,y)g(x) {\rm d}x\right) 
 {\rm d}y.
\end{align*}
Using the H\"older inequality (\cite[Theorem 2.4]{BS}), we obtain 
\begin{align*}
\left\|
\int_{{\mathbb R}^n} f(\cdot, y)
{\rm d}y
\right\|_{X''}
\le
\sup_{g \in X', \|g\|_{X'}\le 1}
\int_{{\mathbb R}^n}
\|f(\cdot,y)\|_{X}\|g\|_{X'} 
{\rm d}y
\le
\int_{{\mathbb R}^n}
\|f(\cdot,y)\|_{X}
{\rm d}y.
\end{align*}

In particular, let $X$ be a one of the Banach function spaces in Definition \ref{def:250506-1}.
Then, by Thorem \ref{thm:250506-3}, $X''$ coincides to $X$ with norm coincidence.
Thus, the above generalized Minkowski inequality coincides to classical one.

\subsection{Tools from harmonic analysis}

In this subsection, we prepare some ingredients from harmonic analysis.

\subsubsection{Maximal operator}
First, we recall the Hardy--Littlewood maximal operator.
Define the Hardy--Littlewood maximal operator $M$ as
\[
M f(t)\equiv\sup_{a,b:0<a<t<b<\infty}
\frac{1}{b-a}\int_a^b |f(s)|{\rm d}s
\quad(0<t<\infty)
\]
for a measurable function $f$ on $(0,\infty)$.
In the same way, we also define the maximal operator for the function on ${\mathbb R}^n$, that is,
\[
Mf(x)\equiv
\sup_{B}\dfrac{\chi_B(x)}{|B|}
\int_{{\mathbb R}^n} |f(y)| {\rm d}y
\quad
(f \in L^0({\mathbb R}), x \in {\mathbb R}^n),
\]
where the supremum is taken over for all ball $B$ 
in ${\mathbb R}^n$. 
(We use the same notation $M$ if there is no confusion.)
The maximal operator $M$ is bounded on $L^p({\mathbb R}^n)$
for $1<p<\infty$.
Thanks to this fact, we obtain the Lebesgue differential theorem.

\begin{lemma}[{\cite[Theorem 1.48]{Sa18}}] \label{lem:250905-1}
Let $f \in L^1_{\rm loc}({\mathbb R}^n)$.
Then
\[
\lim_{r \to 0} \dfrac{1}{|B(x,r)|}\int_{B(x,r)} |f(y)-f(x)| {\rm d}y=0
\]
for a.e. $x \in {\mathbb R}^n$.
Here, $B(x,r)$ is a ball with center $x \in {\mathbb R}^n$ and radius $r>0$.
\end{lemma}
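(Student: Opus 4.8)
The plan is to run the classical density argument, reducing everything to an endpoint bound for $M$. Since the assertion is local, I would first fix $N\in\N$ and prove the conclusion for a.e.\ $x\in B(0,N)$. For such $x$ and for $r<N$ one has $B(x,r)\subset B(0,2N)$, so replacing $f$ by $f\chi_{B(0,2N)}$ affects neither the integrand nor the averages that enter the limit; there is therefore no loss in assuming $f\in L^1(\R^n)$. It is convenient to introduce the oscillation functional
\[
\Omega f(x)\equiv\limsup_{r\to0}\frac{1}{|B(x,r)|}\int_{B(x,r)}|f(y)-f(x)|\,{\rm d}y,
\]
so that the goal becomes $\Omega f=0$ almost everywhere. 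Note that $\Omega$ is subadditive, $\Omega(f_1+f_2)\le\Omega f_1+\Omega f_2$, and that it admits the crude pointwise majorant $\Omega f(x)\le Mf(x)+|f(x)|$.

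The first easy input is that $\Omega g\equiv0$ for every $g\in C_c(\R^n)$: given $x$, uniform continuity makes the integrand $|g(y)-g(x)|$ uniformly small on $B(x,r)$ as $r\to0$. The second, and decisive, input is the weak-type $(1,1)$ inequality
\[
|\{x\in\R^n:Mh(x)>\lambda\}|\le\frac{C_n}{\lambda}\,\|h\|_{L^1}\qquad(\lambda>0),
\]
which I would establish directly from the Vitali $5r$-covering lemma applied to the balls realizing $Mh(x)>\lambda$. I emphasize that the $L^p$-boundedness of $M$ recorded above ($1<p<\infty$) is \emph{not} sufficient here, because an $L^1$-datum cannot be controlled in any $L^p$ with $p>1$; supplying the covering argument at the $L^1$-endpoint is exactly the step that carries the weight, and it is the main obstacle.

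With these two ingredients the conclusion is routine. Given $\lambda,\varepsilon>0$, the density of $C_c(\R^n)$ in $L^1$ provides $g\in C_c(\R^n)$ with $\|f-g\|_{L^1}<\varepsilon$; writing $h=f-g$ and using $\Omega g=0$ together with subadditivity gives $\Omega f\le Mh+|h|$ pointwise a.e. Hence
\[
\{\Omega f>2\lambda\}\subset\{Mh>\lambda\}\cup\{|h|>\lambda\},
\]
and combining the weak-type bound with Chebyshev's inequality yields $|\{\Omega f>2\lambda\}|\le(C_n+1)\varepsilon/\lambda$. Letting $\varepsilon\to0$ shows $|\{\Omega f>2\lambda\}|=0$ for every $\lambda>0$, and taking the union over $\lambda=1/k$, $k\in\N$, gives $\Omega f=0$ a.e.\ on $B(0,N)$. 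Since $N$ is arbitrary, the claim follows.
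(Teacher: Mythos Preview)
Your argument is correct and is the standard textbook proof of the Lebesgue differentiation theorem: localize, reduce to $L^1$, approximate by $C_c$, and control the error via the weak-type $(1,1)$ bound for $M$ together with Chebyshev. All steps are sound.

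There is, however, nothing to compare against in the paper itself: the lemma is simply quoted from \cite[Theorem~1.48]{Sa18} and no proof is given. The only hint the paper offers is the sentence preceding the lemma, which says that the Lebesgue differentiation theorem is obtained ``thanks to'' the $L^p$-boundedness of $M$ for $1<p<\infty$. You are right to flag that this is not quite the relevant ingredient for $f\in L^1_{\rm loc}$: after localization one only has $f\in L^1$, and the density argument genuinely requires the endpoint weak-type $(1,1)$ inequality (Vitali covering), not the strong $L^p$ bound. So your proof actually supplies a sharper justification than the informal remark in the paper; beyond that there is no ``paper's own proof'' to contrast with.
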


The remarkable property of maximal operator is 
that we can estimate the convolution operator
which kernel has appropriate decay.

\begin{lemma}[{\cite[Lemma 2.7]{NS24}}]\label{lem:221222-1}
Let $a>0$ and $t>0$.
For all non-negative
measurable functions $f=f(s)$ on $(0, \infty)$,
we have
\[
\int_0^t ae^{-a(t-s)} f(s) {\rm d}s
\le (1+e^{-1})Mf(t).
\]
\end{lemma}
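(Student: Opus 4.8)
The plan is to represent the exponential kernel through its own tail and then interchange the order of integration, reducing the whole estimate to a one-sided averaging bound controlled by $Mf(t)$. First I would dispose of the trivial case: if $Mf(t)=\infty$ there is nothing to prove, so I may assume $Mf(t)<\infty$. The key preliminary observation is the one-sided averaging estimate
\[
\int_{t-u}^t f(s)\,{\rm d}s \le u\,Mf(t)\qquad(0<u\le t).
\]
To get this from the definition of $M$, I would average over the interval $(t-u,\,t+\eps)$, which contains $t$ strictly, and let $\eps\to 0^+$; since $f\ge 0$, monotone convergence gives the displayed bound. In particular, taking $u=t$ shows $f\in L^1(0,t)$, so the integral on the left of the claim is finite.

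Next I would write the kernel as a tail integral, $a e^{-a(t-s)}=\int_{t-s}^\infty a^2 e^{-ar}\,{\rm d}r$ for $s<t$, substitute this into $\int_0^t a e^{-a(t-s)}f(s)\,{\rm d}s$, and apply Tonelli's theorem (all integrands are non-negative) to exchange the $s$- and $r$-integrals. The condition $t-s<r$ becomes $s>t-r$, so the inner $s$-integral runs over $(\max(0,t-r),\,t)$, an interval with right endpoint $t$ and length at most $r$. Invoking the one-sided bound above then yields
\[
\int_0^t a e^{-a(t-s)}f(s)\,{\rm d}s \le Mf(t)\int_0^\infty a^2 e^{-ar}\,r\,{\rm d}r = Mf(t),
\]
since $\int_0^\infty a^2 r e^{-ar}\,{\rm d}r=1$. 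In particular the asserted inequality holds, as $1\le 1+e^{-1}$.

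The only genuinely delicate point is the first step: the maximal operator as defined averages over intervals $(a,b)$ with $t$ \emph{strictly} interior, whereas the natural averaging set here, $(\max(0,t-r),\,t)$, has $t$ as its right endpoint. I would resolve this by the $\eps$-enlargement together with the non-negativity of $f$ and monotone convergence, which is precisely where the hypothesis $f\ge 0$ enters. An equivalent route avoiding the tail representation is integration by parts against the primitive $F(u)=\int_{t-u}^t f$, which produces the factor $1-e^{-at}$ in place of $1$; keeping the exact length $\min(r,t)$ in the estimate above recovers the same sharper constant $1-e^{-at}$. Either way the argument delivers a constant at most $1$, so the factor $1+e^{-1}$ in the statement is valid (indeed not optimal).
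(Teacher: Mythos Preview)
Your argument is correct. Writing $ae^{-a(t-s)}=\int_{t-s}^{\infty}a^{2}e^{-ar}\,{\rm d}r$, applying Tonelli, and bounding the resulting inner integral by $\min(r,t)\,Mf(t)\le r\,Mf(t)$ via the one-sided averaging estimate is a clean proof; the $\eps$-enlargement to pass from intervals with $t$ strictly interior to intervals with $t$ as an endpoint is handled correctly using $f\ge 0$ and monotone convergence. The computation $\int_0^\infty a^2 r e^{-ar}\,{\rm d}r=1$ is right, so you in fact obtain the sharper constant $1$ (and, as you note, keeping $\min(r,t)$ gives $1-e^{-at}$), which of course implies the stated bound with $1+e^{-1}$.

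As for comparison: the present paper does not give its own proof of this lemma; it simply quotes the result from \cite[Lemma~2.7]{NS24}. There is therefore no in-paper argument to compare against. Your tail-integral/Tonelli route and the integration-by-parts variant you sketch are both standard and essentially equivalent ways of reducing the exponential convolution to the one-sided maximal average; either one is a perfectly acceptable self-contained proof, and both yield a constant no larger than $1$, so the factor $1+e^{-1}$ in the cited statement is not sharp.
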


The following is very important maximal inequality for sequences of functions.

\begin{lemma}[Fefferman--Stein vector-valued maximal inequality]\label{lem:250829-1}
Let $1<\rho<\infty$
and
$1<\sigma \le \infty$.
Then for all sequences
$\{f_j\}_{j=1}^\infty$ of measurable functions over $(0,\infty)$,
\[
\int_0^\infty 
\left(
\sum_{j=1}^\infty M f_j(t)^\sigma
\right)^{\frac{\rho}\sigma}{\rm d}t
\lesssim
\int_0^\infty 
\left(
\sum_{j=1}^\infty |f_j(t)|^\sigma
\right)^{\frac{\rho}\sigma}{\rm d}t.
\]
Here a natural modification is made if $\sigma=\infty$.
\end{lemma}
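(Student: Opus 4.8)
The plan is to read the claim as an $\ell^\sigma$-valued mapping property of $M$: setting $G=(\sum_j|f_j|^\sigma)^{1/\sigma}$ and $F=(\sum_j(Mf_j)^\sigma)^{1/\sigma}$, the assertion is that the sublinear operator $T\colon(f_j)_j\mapsto F$ is bounded from $L^\rho(\ell^\sigma)$ into $L^\rho$. Two cases are immediate. When $\sigma=\infty$, monotonicity of $M$ gives $Mf_j\le MG$ for every $j$ (since $|f_j|\le G$), hence $\sup_jMf_j\le MG$ pointwise, and the scalar $L^\rho$-boundedness of $M$ recalled above finishes the estimate. When $\rho=\sigma$, Tonelli's theorem lets me integrate termwise, so that $\int_0^\infty F^\sigma=\sum_j\int_0^\infty(Mf_j)^\sigma\lesssim\sum_j\int_0^\infty|f_j|^\sigma=\int_0^\infty G^\sigma$ by the same scalar bound; this diagonal estimate also records that $T$ is of strong type $(\sigma,\sigma)$.

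For the remaining exponents I would split at the diagonal. For $\sigma<\rho<\infty$ I use duality in $L^{\rho/\sigma}$, whose exponent $\rho/\sigma$ exceeds $1$: writing $\|F\|_\rho^\sigma=\|F^\sigma\|_{\rho/\sigma}=\sup\{\int F^\sigma h:h\ge0,\ \|h\|_{(\rho/\sigma)'}\le1\}$, I bound each term by the scalar weighted Fefferman--Stein inequality $\int(Mf_j)^\sigma h\lesssim\int|f_j|^\sigma Mh$, sum in $j$ to reach $\int G^\sigma Mh$, and then apply H\"older together with the boundedness of $M$ on $L^{(\rho/\sigma)'}$ (which holds since $(\rho/\sigma)'\in(1,\infty)$) to get $\int G^\sigma Mh\lesssim\|G\|_\rho^\sigma$. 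For $1<\rho<\sigma$ this duality is unavailable, so I would instead establish the vector-valued weak-type estimate $\|F\|_{L^{1,\infty}}\lesssim\|G\|_{L^1}$ by running the Calder\'on--Zygmund decomposition of $G$ at height $\lambda$ and estimating the good and bad parts of each $f_j$ separately, and then interpolate (Marcinkiewicz) this weak $(1,1)$ bound against the strong $(\sigma,\sigma)$ bound from the diagonal case to cover every $\rho\in(1,\sigma)$.

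I expect the genuine work to lie in the range $1<\rho<\sigma$, i.e.\ in the Calder\'on--Zygmund/weak-$(1,1)$ step, because it is the only place where I cannot simply quote scalar boundedness or monotonicity of $M$ and must instead control the whole $\ell^\sigma$-norm of the maximal functions simultaneously. The scalar weighted inequality $\int(Mf)^\sigma h\lesssim\int|f|^\sigma Mh$ used in the range $\rho>\sigma$ is the other ingredient I would need to prove or cite; its verification is standard but not entirely trivial. Since the underlying measure here is Lebesgue measure on $(0,\infty)$, which is doubling, all of the Calder\'on--Zygmund and interpolation machinery applies verbatim, and assembling the four ranges $\sigma=\infty$, $\rho=\sigma$, $\sigma<\rho<\infty$ and $1<\rho<\sigma$ yields the full statement for all $1<\rho<\infty$ and $1<\sigma\le\infty$.
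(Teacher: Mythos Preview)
Your outline is a correct sketch of the classical Fefferman--Stein argument, and each of the four cases you describe (monotonicity for $\sigma=\infty$, Tonelli on the diagonal, the weighted estimate $\int (Mf)^\sigma h\lesssim\int|f|^\sigma Mh$ above the diagonal, and Calder\'on--Zygmund plus Marcinkiewicz below it) would indeed assemble into a proof; your remark that Lebesgue measure on $(0,\infty)$ is doubling justifies running the machinery there.

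The paper, however, does not prove this lemma at all: it simply quotes the Fefferman--Stein inequality over ${\mathbb R}$ from the literature and observes that a zero extension followed by restriction transfers it to $(0,\infty)$. Concretely, if $\tilde f_j$ denotes the extension of $f_j$ by $0$ to ${\mathbb R}$ and $\widetilde M$ is the maximal operator on ${\mathbb R}$, then for $t>0$ one has $Mf_j(t)\le \widetilde M\tilde f_j(t)$ because every admissible interval $(a,b)\subset(0,\infty)$ is also an interval in ${\mathbb R}$; hence the $(0,\infty)$ inequality is dominated pointwise by the ${\mathbb R}$ inequality. Your route reproves the theorem from scratch, which is self-contained but considerably longer; the paper's route is a two-line reduction that leans on an external reference. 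Either is valid, but the zero-extension trick is worth knowing since it avoids redoing the Calder\'on--Zygmund step in a new setting.
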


Remark that we have the Fefferman--Stein vector-valued maximal inequality over  ${\mathbb R}$.
See \cite{Sa18} for example.
A trivial zero extension and a restriction allow us to work in $(0,\infty)$.

\subsubsection{Interpolation}

We recall the real interpolation of $L^p$ spaces.
They will be used in Section \ref{app}.
First, we state the real interpolation result
for the vector-valued $L^p$ spaces.
Note that the results in this subsection hold 
for the Banach space $X$ which is not contained $L^0({\mathbb R}^n)$. 

\begin{lemma}[{\cite[5.3.1 Theorem]{BeLo76}}]\label{lem:250808-3}
Assume that $X$ is a Banach space.
Let $0<p_0<p_1 \le \infty$, $0<q \le \infty$ and $0<\theta<1$.
Then,
\[
\left(L^{p_0}(X), L^{p_1}(X)\right)_{\theta, q}
=L^{p,q}(X),
\]
where $1/p=(1-\theta)/p_0+\theta/p_1$.
\end{lemma}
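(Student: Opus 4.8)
The plan is to reduce the vector-valued identity to the classical scalar interpolation formula $(L^{p_0},L^{p_1})_{\theta,q}=L^{p,q}$ by identifying the Peetre $K$-functional of an $X$-valued function $f$ with the scalar $K$-functional of its pointwise norm $\phi(x):=\|f(x)\|_{X}$. Recall that for a compatible couple $(A_0,A_1)$ the real interpolation norm is
\[
\|f\|_{(A_0,A_1)_{\theta,q}}=\left(\int_0^\infty \left(t^{-\theta}K(t,f)\right)^q\,\frac{{\rm d}t}{t}\right)^{1/q},
\qquad
K(t,f)=\inf_{f=f_0+f_1}\left(\|f_0\|_{A_0}+t\|f_1\|_{A_1}\right),
\]
with the usual modification when $q=\infty$. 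Since $\|f\|_{L^{p,q}(X)}=\|\phi\|_{L^{p,q}}$ by definition of the vector-valued Lorentz space, it suffices to establish
\[
K\left(t,f;L^{p_0}(X),L^{p_1}(X)\right)=K\left(t,\phi;L^{p_0},L^{p_1}\right),
\]
because inserting the right-hand side into the interpolation functional and invoking the scalar theorem immediately yields $\|f\|_{(L^{p_0}(X),L^{p_1}(X))_{\theta,q}}=\|\phi\|_{L^{p,q}}$.

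For the inequality $K(t,\phi)\le K(t,f)$ I would take an arbitrary decomposition $f=f_0+f_1$ and put $\phi_i:=\|f_i(\cdot)\|_{X}$, so that $\phi\le\phi_0+\phi_1$ pointwise while $\|\phi_i\|_{L^{p_i}}=\|f_i\|_{L^{p_i}(X)}$. Splitting $\phi$ proportionally, namely $g_i:=\phi\,\phi_i/(\phi_0+\phi_1)$ on $\{\phi_0+\phi_1>0\}$ and $g_i:=0$ elsewhere, gives a genuine scalar decomposition $g_0+g_1=\phi$ with $0\le g_i\le\phi_i$; hence $K(t,\phi)\le\|g_0\|_{p_0}+t\|g_1\|_{p_1}\le\|f_0\|_{L^{p_0}(X)}+t\|f_1\|_{L^{p_1}(X)}$, and taking the infimum over decompositions of $f$ proves the claim. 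For the reverse inequality I would transfer a scalar decomposition back to the vector-valued setting: given $\phi=g_0+g_1$ with $g_i\ge 0$, set $f_i(x):=f(x)\,g_i(x)/\phi(x)$ on $\{\phi>0\}$ and $f_i:=0$ on $\{\phi=0\}$. Then $f_i$ is strongly measurable, $\|f_i(x)\|_{X}=g_i(x)$ pointwise, and $f_0+f_1=f$, so $\|f_i\|_{L^{p_i}(X)}=\|g_i\|_{p_i}$ and taking the infimum gives $K(t,f)\le K(t,\phi)$.

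With the two $K$-functionals identified, the proof is completed by the scalar result, which I would in turn obtain from Holmstedt's formula
\[
K\left(t,\phi;L^{p_0},L^{p_1}\right)\approx\left(\int_0^{t^\alpha}\phi^*(s)^{p_0}\,{\rm d}s\right)^{1/p_0}+t\left(\int_{t^\alpha}^\infty\phi^*(s)^{p_1}\,{\rm d}s\right)^{1/p_1}
\]
for a suitable exponent $\alpha=\alpha(p_0,p_1)$, together with the change of variables that recognises the resulting expression as the Lorentz quasi-norm $\|\phi\|_{L^{p,q}}=\big(\int_0^\infty (s^{1/p}\phi^*(s))^q\,{\rm d}s/s\big)^{1/q}$, where $\phi^*$ denotes the decreasing rearrangement.

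The main difficulty I anticipate is bookkeeping rather than conceptual. I must verify that the proportional splitting and its inverse preserve strong measurability of the $X$-valued functions (so that the $f_i$ are honest elements of the vector-valued Lebesgue spaces), handle the null set $\{\phi=0\}$ cleanly, and treat the endpoints $p_1=\infty$ and $q=\infty$ with the appropriate modifications. A secondary point is that the statement is posed in the full quasi-Banach range $0<p_0<p_1\le\infty$, $0<q\le\infty$, where the interpolation space is still defined through the same $K$-functional but the triangle inequality degrades to a quasi-inequality; this does not affect the comparison of the two $K$-functionals above, which uses only the lattice (monotonicity) structure of the $L^{p_i}(X)$ norms, so the reduction to the scalar case goes through unchanged.
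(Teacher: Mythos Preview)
The paper does not supply a proof of this lemma; it is quoted verbatim from Bergh--L\"ofstr\"om, Theorem~5.3.1, and used as a black box. Your proposal is correct and is in fact the standard route taken in that reference: one reduces the vector-valued statement to the scalar one by observing that the $K$-functional of $f$ in $(L^{p_0}(X),L^{p_1}(X))$ coincides with the $K$-functional of $\phi=\|f(\cdot)\|_X$ in $(L^{p_0},L^{p_1})$, and then invokes the scalar identification $(L^{p_0},L^{p_1})_{\theta,q}=L^{p,q}$ via the explicit $K$-functional/Holmstedt formula. One small point worth tightening in your write-up: in the step $K(t,f)\le K(t,\phi)$ you tacitly restrict to decompositions $\phi=g_0+g_1$ with $g_i\ge 0$; this is harmless because for any decomposition one has $\phi\le|g_0|+|g_1|$, and your own proportional-splitting trick then produces a non-negative decomposition with no larger $L^{p_i}$-norms, so the infimum over all decompositions equals the infimum over non-negative ones.
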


To consider the real interpolation of Besov spaces, we introduce some notations.
Let $X$ be a Banach space 
and let $s \in {\mathbb R}$ and $q>0$.
Then, for $(a_m) \subset X$, we define
\[
\|(a_m)\|_{\dot{\ell}^s_q(X)}
=
\left(
\sum_{m \in {\mathbb Z}} (2^{sm} \|a_m\|_{X})^q\right)^{\frac1q}
\]
and $\dot{\ell}^s_q(X)$ is the set of all sequences 
$(a_m) \subset X$ such that $\|(a_m)\|_{\dot{\ell}^s_q(X)}<\infty$.

In particular, if we take $X=L^p$ and $a_m=\varphi_m(D)f$ 
for $f \in {\mathcal S}' / {\mathcal P}$,
then
$\dot{\ell}^s_q(X)$ stands for homogeneous Besov spaces.
Here, 
${\mathcal S}'$ is the set of all tempered distributions,
${\mathcal P}$ is the set of all polynomials, and
$\varphi$ is a suitable smooth function.
(See Section \ref{app} for details.)

We can consider the real interpolation space for $\dot{\ell}^s_q(X)$.
First, we recall the case $X=X_0=X_1$, which is a Banach space.

\begin{lemma}[{\cite[5.6.1 Theorem]{BeLo76}}] \label{lem:250808-1}
Let $0<\theta<1$.
Assume that $0<q_0 \le \infty$, $0<q_1 \le \infty$ 
and $s_0, s_1 \in {\mathbb R}$ with $s_0 \neq s_1$.
Then, for all $0<q \le \infty$, we have
\[
\left(
\dot{\ell}^{s_0}_{q_0}(X), \dot{\ell}^{s_1}_{q_1}(X)
\right)_{\theta, q}
=
\dot{\ell}^{s}_{q}(X),
\]
where $s=(1-\theta)s_0+\theta s_1$.

Moreover, if $s=s_0=s_1$, then we have
\[
\left(
\dot{\ell}^{s}_{q_0}(X), \dot{\ell}^{s}_{q_1}(X)
\right)_{\theta, q}
=
\dot{\ell}^{s}_{q}(X)
\]
provided that $1/q=(1-\theta)/q_0+\theta/q_1$.
\end{lemma}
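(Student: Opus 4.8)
The plan is to realize both identities through the real $K$-method and to treat the two regimes separately, according to whether the smoothness indices coincide. Recall that for a compatible couple $(A_0,A_1)$ the (quasi-)norm of $(A_0,A_1)_{\theta,q}$ is
\[
\|a\|_{(A_0,A_1)_{\theta,q}}
=\left(\int_0^\infty \left(t^{-\theta}K(t,a)\right)^q\frac{{\rm d}t}{t}\right)^{\frac1q},
\qquad
K(t,a)=\inf_{a=b+c}\left(\|b\|_{A_0}+t\|c\|_{A_1}\right),
\]
with the usual modification when $q=\infty$. Throughout I set $A_i=\dot{\ell}^{s_i}_{q_i}(X)$ and write $c_m=\|a_m\|_{X}$.

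For the second identity ($s_0=s_1=s$) I would avoid any $K$-functional computation. The map $T\colon (a_m)\mapsto (2^{sm}a_m)$ is an isometric isomorphism of $\dot{\ell}^{s}_{r}(X)$ onto $\dot{\ell}^{0}_{r}(X)=\ell^r(X)$ simultaneously for $r\in\{q_0,q_1,q\}$; since the real interpolation functor is preserved under isomorphisms of couples, it suffices to identify $(\ell^{q_0}(X),\ell^{q_1}(X))_{\theta,q}$. Assuming without loss of generality $q_0<q_1$ (the case $q_0=q_1$ being trivial and the order reversible via $\theta\mapsto 1-\theta$), I would regard $\ell^{r}(X)=L^{r}(\Z,X)$ with $\Z$ carrying the counting measure and apply Lemma \ref{lem:250808-3} with $p_0=q_0$ and $p_1=q_1$. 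This gives $(\ell^{q_0}(X),\ell^{q_1}(X))_{\theta,q}=\ell^{p,q}(X)$ with $1/p=(1-\theta)/q_0+\theta/q_1$. The hypothesis $1/q=(1-\theta)/q_0+\theta/q_1$ forces $p=q$, and since a Lorentz space collapses to the Lebesgue space when its two indices agree, $\ell^{q,q}(X)=\ell^{q}(X)$; pulling back by $T^{-1}$ yields $\dot{\ell}^{s}_{q}(X)$.

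For the first identity ($s_0\neq s_1$, say $s_0<s_1$ after possibly swapping) I would compute $K(t,a)$ by a threshold decomposition. Let $m^*(t)$ be the least integer $m$ with $2^{s_0m}\le t\,2^{s_1m}$, i.e. $m\ge -\log_2 t/(s_1-s_0)$. Setting $b_m=a_m$ for $m\ge m^*(t)$ and $b_m=0$ otherwise, and $c_m=a_m-b_m$, gives at once the upper bound
\[
K(t,a)\le
\Big(\sum_{m\ge m^*(t)}(2^{s_0m}c_m)^{q_0}\Big)^{\frac1{q_0}}
+t\Big(\sum_{m<m^*(t)}(2^{s_1m}c_m)^{q_1}\Big)^{\frac1{q_1}}.
\]
Conversely, for an arbitrary decomposition $a_m=\beta_m+\gamma_m$ I would use $c_m\le\|\beta_m\|_{X}+\|\gamma_m\|_{X}$ together with the fact that $2^{s_0m}\le t\,2^{s_1m}$ holds precisely on $\{m\ge m^*(t)\}$, and exploit that the two weight families $2^{s_0m}$ and $2^{s_1m}$ are geometrically separated (because $s_1-s_0>0$) to bound the right-hand side above by $\|\beta\|_{A_0}+t\|\gamma\|_{A_1}$ up to an absolute constant. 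I expect this lower bound to be the main obstacle: the indices $q_0$ and $q_1$ differ, so the cross terms must be transferred between an $\ell^{q_0}$- and an $\ell^{q_1}$-norm, and the only mechanism available is the geometric decay supplied by $s_0\neq s_1$. Once the two-sided estimate for $K(t,a)$ is in hand, I would substitute it into the displayed interpolation norm, partition $(0,\infty)$ into the intervals on which $m^*(t)$ is constant, and evaluate the resulting geometric sums; using $s=(1-\theta)s_0+\theta s_1$ this routine bookkeeping returns $\|a\|_{\dot{\ell}^{s}_{q}(X)}$ up to constants for every $q\in(0,\infty]$, which is exactly the feature that distinguishes the separated-weight case from the second identity.
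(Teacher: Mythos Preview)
The paper does not prove this lemma at all; it is stated with a citation to \cite[5.6.1 Theorem]{BeLo76} and used as a black box in the interpolation arguments of Section~\ref{app}. So there is no ``paper's own proof'' to compare against.

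Your plan is essentially the classical route taken in Bergh--L\"ofstr\"om itself. The reduction of the $s_0=s_1$ case to Lemma~\ref{lem:250808-3} via the isometry $(a_m)\mapsto(2^{sm}a_m)$ is clean and correct (one should check that Lemma~\ref{lem:250808-3} is stated over a general measure space so that the counting measure on $\Z$ is admissible, but in \cite{BeLo76} it is). For the $s_0\neq s_1$ case your threshold decomposition gives the right upper bound on $K(t,a)$, and you are right that the lower bound is where the work lies. However, the mechanism you describe---``transferring cross terms between an $\ell^{q_0}$- and an $\ell^{q_1}$-norm using geometric decay''---is not quite how the argument runs, and as written it is too vague to be checked. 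The standard route avoids a two-sided formula for $K(t,a)$ with the full $\ell^{q_i}$-blocks: one instead uses the trivial single-term lower bound $K(t,a)\ge \min(2^{s_0m},\,t\,2^{s_1m})\,c_m$ for every $m$, inserts this into $\int_0^\infty (t^{-\theta}K(t,a))^q\,{\rm d}t/t$, partitions $t$ over the intervals where $m^*(t)$ is constant, and sums. This sidesteps entirely the problem of reconciling $q_0\neq q_1$ in the lower bound, which is why the result holds for \emph{all} $0<q\le\infty$ regardless of $q_0,q_1$. Your sketch would benefit from making this step explicit rather than gesturing at a cross-term transfer that, taken literally, does not go through.
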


Meanwhile, for the case $X_0 \neq X_1$,
the reader refers to \cite[5.6.2 Theorem]{BeLo76}.

%\begin{lemma}[{\cite[5.6.2 Theorem]{BeLo76}}]\label{lem:250808-2}
%Let $0<\theta<1$.
%Assume that $0<q_0 < \infty$, $0<q_1 < \infty$ 
%Then, we have
%\[
%\left(
%\dot{\ell}^{s_0}_{q_0}(X_0), \dot{\ell}^{s_1}_{q_1}(X_1)
%\right)_{\theta, q}
%=
%\dot{\ell}^{s}_{q}\left((X_0, X_1)_{\theta, q}\right),
%\]
%where $s=(1-\theta)s_0+\theta s_1$ and
%$1/q=(1-\theta)/q_0+\theta/q_1$.
%\end{lemma}

\section{Proof of Theorem \ref{main1}} \label{proof}

First, we will show (1).
Suppose that (\ref{eq:250421-1}) holds for all $f \in X$ and $z \in {\mathbb R}^n$.
Let $g \in L^1$.
Then, by the Minkowski inequality (\ref{eq:250812-2}), we have
\begin{align}\label{eq:250505-1}
\|f*g\|_{X}
&=
\left\|\int_{{\mathbb R}^n} f(\cdot-y)g(y) {\rm d} y\right\|_{X}\nonumber
\\
&\le
\int_{{\mathbb R}^n} \left\|f(\cdot-y)\right\|_{X}|g(y)| {\rm d} y
\lesssim
\int_{{\mathbb R}^n} \left\|f\right\|_{X}|g(y)| {\rm d} y
=
\|f\|_{X}\|g\|_{L^1}.
\end{align} 
In third inequality, we use the condition (\ref{eq:250421-1}).
Thus, we obtain Young's inequality.

Meanwhile, let $f \in X$.
Suppose that Young's inequality holds for the function $f$.
Let $z \in {\mathbb R}^n$.
For $k \in {\mathbb N}$, 
let $g_{k}=k^n\chi_{[0,1]^n}(k(\cdot-z))$.
Clearly, $g_k \in L^1$ and $\|g_k\|_{L^1}=1$.
Moreover, for $x \in {\mathbb R}^n$ we have
\[
f*g_k(x)
=
\int_{{\mathbb R}^n} f(x-y) k^n\chi_{[0,1]^n}(k(y-z)) {\rm d}y
=
\int_{[0,k^{-1}]^n} k^n f(x-y-z) {\rm d}y.
\]
Hence, by the Young inequality, we obtain
\[
\left\|
\dfrac{1}{k^{-n}}\int_{[0,k^{-1}]^n} f(\cdot-y-z) {\rm d}y
\right\|_{X}
\lesssim
\|f\|_{X}\|g_k\|_{L^1}
=\|f\|_{X}.
\]
Finally, using Lemma \ref{lem:250905-1}
and the Fatou property, we have
\begin{equation} \label{eq:250506-4}
\|f(\cdot-z)\|_X
\le
\liminf_{k \to \infty}
\left\|
\dfrac{1}{k^{-n}}\int_{[0,k^{-1}]^n} f(\cdot-y-z) {\rm d}y
\right\|_{X}
\lesssim
\|f\|_{X}\|g_k\|_{L^1}
=\|f\|_{X}.
\end{equation}
Thus, we obtained the desired result.
(Remark that we can apply Lemma \ref{lem:250905-1}
since $X$ has the property (BLI).)

\begin{remark}
By (2) in Theorem \ref {main1}, if Young's inequality 
$\|h*g\|_{X} \lesssim \|h\|_{X}\|g\|_{L^1}$
holds for all $h \in X$ and $g \in L^1$, then
for all $f \in X$ and $z \in {\mathbb R}^n$,
the condition
\[
\|f(\cdot-z)\|_{X} \lesssim \|f\|_{X}
\]
holds.
Indeed, let $f \in X$ and $z \in {\mathbb R}^n$.
By assumption, $\|f*g\|_{X} \lesssim \|f\|_{X}\|g\|_{L^1}$
holds for all $g \in L^1$.
Thus, by (2) in Theorem \ref {main1}, 
$\|f(\cdot-z)\|_{X} \lesssim \|f\|_{X}$ holds.
Thus, if $X$ is a ball Banach function space, the condition (\ref{eq:250421-1}) is necessary and sufficient condition for Young's inequality on $X$.
\end{remark}

\begin{remark}
By Theorem \ref{thm:250506-3}, if we remove the Fatou property from the definition of Banach function spaces, the Young's inequality (\ref{eq:250505-1}) is replaced by
\[
\|f*g\|_{X''}
\lesssim
\|f\|_{X}\|g\|_{L^1}.
\]
In this case, the condition (\ref{eq:250421-1}) seems not to be necessary condition for Young's inequality 
since we cannot use the argument (\ref{eq:250506-4}).
\end{remark}

\section{Application to the maximal regularity estimate}\label{app}

In this section, we apply the Young inequality for the maximal regularity estimate for heat equations.
Consider the heat equation of the form:
\begin{equation}\label{eq:heat}
\begin{cases}
\partial_t u-\Delta u=f&\mbox{ in }(0, \infty) \times {\mathbb R}^n,\\%{\mathbb R}^{n+1}_+,\\
u(0,\cdot)=u_0&\mbox{ on } {\mathbb R}^n%{\mathbb R}^n.
\end{cases}
\end{equation}
We deal with the case where $f$ and $u_0$ belong to some Besov spaces. 
For that purpose, we define Besov spaces associated with Banach function spaces.
Here and below, we assume that $X$ is a sarturated Banch function spaces with the property (\ref{eq:250421-1}).

We adopt the following definition of the Fourier transform.
For $f \in L^1({\mathbb R}^n)$, define its Fourier transform and inverse Fourier transform by
\begin{eqnarray*}
{\mathcal F}f(\xi)
\equiv
(2\pi)^{-\frac{n}{2}}
\int_{{\mathbb R}^n} f(x){\rm e}^{-{\rm i} x \cdot \xi} {\rm d}x, \quad
{\mathcal F}^{-1}f(x)
\equiv
(2\pi)^{-\frac{n}{2}}
\int_{{\mathbb R}^n} f(\xi){\rm e}^{{\rm i} x \cdot \xi} {\rm d}\xi,
\end{eqnarray*}
respectively.
By a well-known method, we can extend 
${\mathcal F},{\mathcal F}^{-1}$ naturally to
${\mathcal S}'({\mathbb R}^n)$.
For $\psi \in {\mathcal S}({\mathbb R}^n)$ 
and $f \in {\mathcal S}'({\mathbb R}^n)$,
define $\psi(D)f
={\mathcal F}^{-1}\left[\psi {\mathcal F}f\right] \in C^{\infty}({\mathbb R}^n)$.

Denote by $B(r)$ the open ball centered at the origin of radius $r>0$.
Let $\varphi \in C^\infty_{\rm c}({\mathbb R}^n)$ satisfy
\[
\chi_{B(4) \setminus B(2)} \le \varphi \le 
\chi_{B(8) \setminus B(1)}.
\]
Then define
$\varphi_j\equiv \varphi(2^{-j}\cdot)$.
Denote by
${\mathcal P}({\mathbb R}^n) \subset {\mathcal S}'({\mathbb R}^n)$
the subspace of all polynomials.

\begin{definition}[Homogeneous Besov space associated with $X$]
Let $s \in {\mathbb R}$ and $1\le r \le \infty$.
We define
\[
\| f \|_{\dot{B}^s_{X, r}}
\equiv
\left(
\sum_{j=-\infty}^\infty 
\left(2^{j s}\left\| \mathcal{F}^{-1}\left[\varphi_j \mathcal{F}f\right] \right\|_{X}\right)^r
\right)^{\frac1r}
\]
for $f \in {\mathcal S}'({\mathbb R}^n)/{\mathcal P}({\mathbb R}^n)$.
The Besov spaces $\dot{B}^s_{X, r}({\mathbb R}^n)$ 
are defiend as the set of all 
$f \in {\mathcal S}'({\mathbb R}^n)/{\mathcal P}({\mathbb R}^n)$
such that
$\| f \|_{\dot{B}^s_{X, r}}$ is finite.
\end{definition}

This space is a special case of generalized Besov type space,
see \cite{LYYS13}. 
Note that by the embedding of $\ell^r$, we have
\begin{align}\label{eq:250812-5}
\dot{B}^s_{X, r_1}({\mathbb R}^n)
\hookrightarrow
\dot{B}^s_{X, r_2}({\mathbb R}^n)
\end{align}
for $1 \le r_1 \le r_2 \le \infty$ and $s \in {\mathbb R}$.

\begin{lemma}\label{lem:lift}{\rm \cite{LYYS13}}
Let $s \in {\mathbb R}$ and $1 \le r \le \infty$.
\begin{enumerate}
\item[$(1)$]
For all $k=1,2,\ldots,n$,
the $k$-th partial derivative
$\partial_k:\dot{B}^s_{X, r}({\mathbb R}^n) \to \dot{B}^{s-1}_{X, r}({\mathbb R}^n)$
is a continuous operator.
\item[$(2)$]
Let $\alpha \in {\mathbb R}$.
Then
$(-\Delta)^\alpha:\dot{B}^s_{X, r}({\mathbb R}^n) \to 
\dot{B}^{s-2}_{X, r}({\mathbb R}^n)$
is an isomorphism
with inverse $(-\Delta)^{-\alpha}:\dot{B}^{s-2\alpha}_{X, r}({\mathbb R}^n)
\to \dot{B}^s_{X, r}({\mathbb R}^n)$.
\item[$(3)$]
Let $\lambda \ge 0$.
Then
$(\lambda-\Delta)^{-1}:\dot{B}^s_{X, r}({\mathbb R}^n) \to \dot{B}^{s+2}_{X, r}({\mathbb R}^n)$
is a continuous operator.
\end{enumerate}
\end{lemma}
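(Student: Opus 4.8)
The plan is to reduce all three statements to a single dyadic Fourier-multiplier estimate powered by Young's inequality on $X$. Since $X$ is a saturated Banach function space satisfying the translation bound \eqref{eq:250421-1}, Theorem \ref{main1}(1) supplies $\|\Phi*h\|_X \lesssim \|\Phi\|_{L^1}\|h\|_X$ for every $h \in X$ and $\Phi \in L^1$; this is the only structural property of $X$ I will use. Fix once and for all an auxiliary $\widetilde{\varphi}\in C^\infty_{\mathrm{c}}(\mathbb{R}^n\setminus\{0\})$ that equals $1$ on $\operatorname{supp}\varphi$ and is supported in a slightly larger annulus bounded away from the origin, and set $\widetilde{\varphi}_j=\widetilde{\varphi}(2^{-j}\cdot)$, so that $\widetilde{\varphi}_j\equiv 1$ on $\operatorname{supp}\varphi_j$.

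The multiplier lemma I would establish is the following. Let $m\in C^\infty_{\mathrm{c}}(\mathbb{R}^n\setminus\{0\})$. Because $\widetilde{\varphi}_j=1$ on $\operatorname{supp}\varphi_j$, one has the symbol identity $m(2^{-j}\xi)\varphi_j(\xi)=\bigl(m\,\widetilde{\varphi}\bigr)(2^{-j}\xi)\,\varphi_j(\xi)$, whence $m(2^{-j}D)\varphi_j(D)f$ equals, up to the normalisation constant in the convolution theorem, $K_j*\varphi_j(D)f$ with $K_j=\mathcal{F}^{-1}\bigl[(m\widetilde{\varphi})(2^{-j}\cdot)\bigr]$. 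A change of variables shows $\|K_j\|_{L^1}=\|\mathcal{F}^{-1}(m\widetilde{\varphi})\|_{L^1}$, a quantity independent of $j$ and finite because $m\widetilde{\varphi}$ is smooth and compactly supported. Young's inequality on $X$ then yields $\|m(2^{-j}D)\varphi_j(D)f\|_X \lesssim \|\varphi_j(D)f\|_X$ with a constant uniform in $j$.

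I would then apply this after extracting the correct power of $2^j$ by homogeneity. For (1), $i\xi_k\varphi_j(\xi)=2^j\,m(2^{-j}\xi)\varphi_j(\xi)$ with $m(\eta)=i\eta_k\widetilde{\varphi}(\eta)$, so $2^{j(s-1)}\|\varphi_j(D)\partial_k f\|_X\lesssim 2^{js}\|\varphi_j(D)f\|_X$, and summing the $r$-th powers in $j$ gives $\partial_k\colon \dot{B}^s_{X,r}\to\dot{B}^{s-1}_{X,r}$. For (2), $|\xi|^{2\alpha}\varphi_j(\xi)=2^{2\alpha j}\,m(2^{-j}\xi)\varphi_j(\xi)$ with $m(\eta)=|\eta|^{2\alpha}\widetilde{\varphi}(\eta)$ (smooth on the annulus), giving boundedness $\dot{B}^s_{X,r}\to\dot{B}^{s-2\alpha}_{X,r}$; the same computation with $-\alpha$ gives the reverse bound $\dot{B}^{s-2\alpha}_{X,r}\to\dot{B}^s_{X,r}$, and since $|\xi|^{2\alpha}|\xi|^{-2\alpha}=1$ on $\mathbb{R}^n\setminus\{0\}$ the two operators are mutually inverse on $\mathcal{S}'/\mathcal{P}$, proving the isomorphism. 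For (3), I would factor $(\lambda+|\xi|^2)^{-1}\varphi_j(\xi)=2^{-2j}\,m_{\mu}(2^{-j}\xi)\varphi_j(\xi)$ with $m_\mu(\eta)=\widetilde{\varphi}(\eta)/(\mu+|\eta|^2)$ and $\mu=2^{-2j}\lambda\ge 0$, which produces the gain $2^{-2j}$ and hence $(\lambda-\Delta)^{-1}\colon \dot{B}^s_{X,r}\to\dot{B}^{s+2}_{X,r}$.

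The main obstacle is obtaining an $L^1$ bound on the convolution kernels that is uniform in the dyadic index $j$, and for (3) also uniform in the parameter $\lambda$. In cases (1) and (2) this is immediate, since the localised symbols are fixed compactly supported smooth functions. In case (3) the symbol $m_\mu$ varies with $\mu=2^{-2j}\lambda$, so I would check that on $\operatorname{supp}\widetilde{\varphi}$, where $|\eta|\ge c>0$, the denominator satisfies $\mu+|\eta|^2\ge c^2$ for all $\mu\ge0$, and that consequently all derivatives of $m_\mu$ up to a fixed order (say $n+1$) are bounded uniformly in $\mu\ge0$; a standard Mikhlin/Bernstein-type estimate then bounds $\|\mathcal{F}^{-1}m_\mu\|_{L^1}$ uniformly, which is precisely what feeds into Young's inequality. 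This uniformity is the only genuinely quantitative point; once it is secured, the embedding \eqref{eq:250812-5} and the $\ell^r$-summation make the remaining steps routine.
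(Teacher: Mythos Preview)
The paper does not prove this lemma at all; it simply cites \cite{LYYS13}. Your proposal supplies a self-contained proof, and the argument you outline is the standard one: localise each dyadic block with an auxiliary cutoff $\widetilde{\varphi}$ supported in an annulus, extract the correct power of $2^j$ by homogeneity, and control the remaining compactly supported smooth multiplier via Young's inequality on $X$ (which you correctly source from Theorem~\ref{main1}(1) under the standing assumption \eqref{eq:250421-1}). This is exactly in the spirit of the paper's later computations (see e.g.\ the proof of Lemma~\ref{thm:1.9}, where the same device $\Phi_j$ and $L^1$-kernel bound are used), so your approach is entirely consistent with the paper's toolkit.

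Two small remarks. First, the statement as printed in the paper has a typo in part~(2): it reads $\dot{B}^{s-2}_{X,r}$ where it should read $\dot{B}^{s-2\alpha}_{X,r}$, as is clear from the inverse map and from your own write-up; you have silently corrected this. Second, the reference to the embedding \eqref{eq:250812-5} at the end of your sketch is superfluous---nothing in the argument for (1)--(3) requires changing the $\ell^r$ index---so you can drop it. The genuinely quantitative point, as you identify, is the uniformity in $\mu=2^{-2j}\lambda\ge 0$ of the $L^1$-norm of $\mathcal{F}^{-1}[\widetilde{\varphi}(\cdot)/(\mu+|\cdot|^2)]$; your justification via uniform derivative bounds on the annulus is correct and sufficient.
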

The operator $(-\Delta)^\alpha$ is called
the lift operator.

We state the maximal regularity estimates for $\dot{B}^s_{X, r}$.
The first one is $L^{\rho}$ maximal regularity estimate for heat equations (\ref{eq:heat}).

\begin{theorem}\label{main2}
Let $1 \le \rho \le \infty$. 
%, and $\delta \in {\mathbb R}$.
Consider the heat equation 
$(\ref{eq:heat})$
with $u_0 \in \dot{B}_{X, \rho}^{2-2/\rho}({\mathbb R}^n)$
and
$f \in L^\rho(0,\infty;\dot{B}^0_{X, \rho}({\mathbb R}^n))$.
%and $u_0 \in \dot{\mathcal N}_{pq\rho}^{-2/\rho}({\mathbb R}^n)$.
Then
\begin{align*}
\|\partial_t u\|_{L^\rho(0,\infty;\dot{B}^0_{X, \rho})}
&+
\|\Delta u\|_{L^\rho(0,\infty;\dot{B}^0_{X, \rho})}
\lesssim
\|u_0\|_{\dot{B}_{X,\rho}^{2-2/\rho}}
+
\|f\|_{L^\rho(0,\infty;\dot{B}^0_{X, \rho})}.
\end{align*}
\end{theorem}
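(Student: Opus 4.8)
The plan is to write the solution of (\ref{eq:heat}) by Duhamel's formula as
\[
u(t)=e^{t\Delta}u_0+w(t),\qquad w(t)=\int_0^t e^{(t-s)\Delta}f(s)\,{\rm d}s,
\]
and to note that $\partial_t u=\Delta u+f$, so by the triangle inequality it suffices to bound $\|\Delta u\|_{L^\rho(0,\infty;\dot B^0_{X,\rho})}$; the estimate for $\partial_t u$ is then immediate. I would treat the two pieces $\Delta e^{t\Delta}u_0$ and $\Delta w$ separately. The structural reason the argument closes is that, since $X$ is a saturated Banach function space satisfying (\ref{eq:250421-1}), Theorem \ref{main1}(1) furnishes Young's inequality $\|k*g\|_X\lesssim\|k\|_{L^1}\|g\|_X$ on $X$; this is precisely what converts an $L^1$-bound on a convolution kernel into an $X$-bound, uniformly in the frequency parameter.

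First I would record the frequency-localized kernel estimate. Writing $u_{0,j}=\varphi_j(D)u_0$ and choosing $\widetilde\varphi_j\equiv1$ on $\mathrm{supp}\,\varphi_j$, the identity $\varphi_j(D)\Delta e^{t\Delta}u_0=K_{j,t}*u_{0,j}$ holds with $K_{j,t}=\mathcal F^{-1}\big[-|\xi|^2e^{-t|\xi|^2}\widetilde\varphi_j(\xi)\big]$. By the parabolic scaling $\xi=2^j\eta$ together with integration by parts to extract spatial decay, I expect the uniform bound
\[
\|K_{j,t}\|_{L^1}\lesssim 2^{2j}e^{-c\,t\,2^{2j}},
\]
where the exponential comes from $e^{-t|\xi|^2}\le e^{-c t 2^{2j}}$ on the annulus $\mathrm{supp}\,\varphi_j$ and the polynomial factors produced by differentiating the Gaussian are absorbed into it. Young's inequality then gives $\|\varphi_j(D)\Delta e^{t\Delta}u_0\|_X\lesssim 2^{2j}e^{-ct2^{2j}}\|u_{0,j}\|_X$. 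Computing $\int_0^\infty(2^{2j}e^{-ct2^{2j}})^\rho\,{\rm d}t$ yields the factor $2^{j(2-2/\rho)}$ after taking the $L^\rho_t$-norm, and summing the $\rho$-th powers over $j$---which I may interchange with the time integral by Tonelli since both norms are built from $\rho$-th powers---produces exactly $\|u_0\|_{\dot B^{2-2/\rho}_{X,\rho}}$. This step is uniform for $1\le\rho\le\infty$ (with the obvious $\sup$ modifications at $\rho=\infty$).

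For $\Delta w$ I would localize in the same way and apply the vector-valued Minkowski inequality (\ref{eq:250812-2}) together with Young's inequality inside the time integral to obtain
\[
\|\varphi_j(D)\Delta w(t)\|_X\lesssim\int_0^t 2^{2j}e^{-c(t-s)2^{2j}}\,\|\varphi_j(D)f(s)\|_X\,{\rm d}s.
\]
With $a=c2^{2j}$ the time kernel is of the form $a\,e^{-a(t-s)}$, so Lemma \ref{lem:221222-1} bounds the right-hand side by $M\big(\|\varphi_j(D)f(\cdot)\|_X\big)(t)$ with a constant independent of $j$. For $1<\rho<\infty$ the Fefferman--Stein inequality (Lemma \ref{lem:250829-1}) applied with $\sigma=\rho$ then sums over $j$ and gives $\|\Delta w\|_{L^\rho(0,\infty;\dot B^0_{X,\rho})}\lesssim\|f\|_{L^\rho(0,\infty;\dot B^0_{X,\rho})}$. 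The main obstacle is the two endpoints $\rho=1$ and $\rho=\infty$, where Lemma \ref{lem:250829-1} is unavailable; these I would settle directly from the kernel bound, for $\rho=1$ by Tonelli (integrating $\int_s^\infty 2^{2j}e^{-c(t-s)2^{2j}}\,{\rm d}t=c^{-1}$ first and then summing in $j$), and for $\rho=\infty$ by the pointwise estimate $\int_0^t 2^{2j}e^{-c(t-s)2^{2j}}\,{\rm d}s\le c^{-1}$ followed by the supremum over $j$ and $t$. Adding the homogeneous and inhomogeneous bounds and using $\partial_t u=\Delta u+f$ finishes the proof.
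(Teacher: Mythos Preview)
Your argument is correct and follows the same strategy as the paper: Duhamel formula, the kernel bound $\|K_{j,t}\|_{L^1}\lesssim 2^{2j}e^{-ct2^{2j}}$ combined with Young's inequality on $X$ (Theorem~\ref{main1}), then Lemma~\ref{lem:221222-1} plus Fefferman--Stein for the Duhamel piece when $1<\rho<\infty$, and direct Fubini/sup arguments at the endpoints $\rho=1,\infty$. The only notable difference is that for the homogeneous term the paper proves the sharper estimate $\|\Delta e^{t\Delta}u_0\|_{L^\rho(0,\infty;\dot B^0_{X,1})}\lesssim\|u_0\|_{\dot B^{2-2/\rho}_{X,\rho}}$ (Lemma~\ref{thm:1.9}, needed later for the Lorentz version Theorem~\ref{main3}) and then invokes the embedding $\dot B^0_{X,1}\hookrightarrow\dot B^0_{X,\rho}$, whereas you go directly to $\dot B^0_{X,\rho}$ and exploit Tonelli to swap the $\ell^\rho$ sum with the $L^\rho_t$ integral---a cleaner route when only Theorem~\ref{main2} is at stake.
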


The second one is Lorentz maximal regularity estimate for heat equations (\ref{eq:heat}).
\begin{theorem}\label{main3}
Let $1 < \rho < \infty$, $1 \le w \le \infty$ and 
$1\le \sigma\le \infty$.
Consider the heat equation 
$(\ref{eq:heat})$
with $u_0 \in \dot{B}_{X, w}^{2-2/\rho}({\mathbb R}^n)$
and
$f \in L^{\rho, w}(0,\infty;\dot{B}^0_{X, \sigma}({\mathbb R}^n))$.
%and $u_0 \in \dot{B}_{X, w}^{2-2/\rho}({\mathbb R}^n)$.
%, and $\delta \in {\mathbb R}$.
%Consider the heat equation 
%$(\ref{eq:heat})$
%with $f \in L^\rho([0,\infty);\dot{\mathcal N}_{pq\rho}^{0}({\mathbb R}^n))$
%and $u_0 \in \dot{\mathcal N}_{pq\rho}^{-2/\rho}({\mathbb R}^n)$.
Then
\begin{align*}
\|\partial_t u\|_{L^{\rho, w}(0,\infty;\dot{B}^0_{X, \sigma})}
&+
\|\Delta u\|_{L^{\rho, w}(0,\infty;\dot{B}^0_{X, \sigma})}
\lesssim
\|u_0\|_{\dot{B}_{X, w}^{2-2/\rho}}
+
\|f\|_{L^{\rho. w}(0,\infty;\dot{B}^0_{X, \sigma})}.
\end{align*}
\end{theorem}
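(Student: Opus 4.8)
The plan is to reduce Theorem \ref{main3} to two independent estimates by writing the solution of \eqref{eq:heat} through Duhamel's formula as $u = v + w$, where $v(t) = e^{t\Delta}u_0$ carries the initial data and $w(t) = \int_0^t e^{(t-s)\Delta}f(s)\,{\rm d}s$ carries the forcing. Since $\partial_t u = \Delta u + f$, it suffices to bound $\|\Delta u\|_{L^{\rho,w}(\dot{B}^0_{X,\sigma})}$, and by the triangle inequality I may treat $\Delta v$ and $\Delta w$ separately. The structural point I would exploit throughout is that $e^{t\Delta}$, $\Delta$ and the Littlewood--Paley projections $\varphi_j(D)$ are all spatial Fourier multipliers and hence commute, so that $\varphi_j(D)\Delta w(t) = \int_0^t \Delta e^{(t-s)\Delta}\varphi_j(D)f(s)\,{\rm d}s$ and the problem decouples across dyadic frequency blocks; this is the same per-block analysis underlying Theorem \ref{main2}.

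For the forcing term I would first establish a pointwise-in-time, per-block bound. On the support of $\varphi_j$ the symbol of $\Delta e^{\tau\Delta}\varphi_j(D)$ equals $-a_j\,\omega(2^{-j}\cdot)\,e^{-\tau a_j\omega(2^{-j}\cdot)}\varphi(2^{-j}\cdot)$, where $a_j = 2^{2j}$ and $\omega(\eta)=|\eta|^2 \sim 1$ on the annulus, so its spatial convolution kernel has $L^1$-norm $\lesssim a_j e^{-c a_j \tau}$ uniformly in $j$. Applying Young's inequality on $X$---available precisely because $X$ is a saturated Banach function space satisfying \eqref{eq:250421-1}, by Theorem \ref{main1}(1)---I obtain $\|\varphi_j(D)\Delta w(t)\|_X \lesssim \int_0^t a_j e^{-c a_j (t-s)}\Phi_j(s)\,{\rm d}s$ with $\Phi_j(s) := \|\varphi_j(D)f(s)\|_X$, and Lemma \ref{lem:221222-1} then upgrades this to $\|\varphi_j(D)\Delta w(t)\|_X \lesssim M\Phi_j(t)$. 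Taking the $\ell^\sigma_j$-norm at fixed $t$ (so that $\|w(t)\|_{\dot{B}^0_{X,\sigma}}\lesssim\|(M\Phi_j(t))_j\|_{\ell^\sigma}$) and then the $L^{\rho,w}_t$-norm, the desired bound $\|\Delta w\|_{L^{\rho,w}(\dot{B}^0_{X,\sigma})}\lesssim \|f\|_{L^{\rho,w}(\dot{B}^0_{X,\sigma})}$ follows from the boundedness of $(h_j)\mapsto(Mh_j)$ on $L^{\rho,w}(0,\infty;\ell^\sigma)$. This vector-valued Lorentz inequality I would deduce from the Fefferman--Stein inequality of Lemma \ref{lem:250829-1} by real interpolation: $(h_j)\mapsto(Mh_j)$ is bounded on $L^{p_0}(\ell^\sigma)$ and on $L^{p_1}(\ell^\sigma)$ for $1<p_0<\rho<p_1<\infty$, and Lemma \ref{lem:250808-3}, applied with the Banach space taken to be $\ell^\sigma$, identifies $(L^{p_0}(\ell^\sigma),L^{p_1}(\ell^\sigma))_{\theta,w}$ as $L^{\rho,w}(\ell^\sigma)$.

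For the initial-data term I would use the analogous per-block heat estimate $\|\varphi_j(D)\Delta v(t)\|_X \lesssim a_j e^{-c a_j t}\|\varphi_j(D)u_0\|_X$, again from the $L^1$ kernel bound and Young's inequality on $X$. Writing $c_j := \|\varphi_j(D)u_0\|_X$, the claim reduces to the elementary mixed-norm inequality
\[
\left\|\left(\sum_j \bigl(a_j e^{-c a_j t}\,c_j\bigr)^\sigma\right)^{1/\sigma}\right\|_{L^{\rho,w}_t}\lesssim \left(\sum_j \bigl(2^{j(2-2/\rho)}\,c_j\bigr)^w\right)^{1/w},
\]
which I would verify by direct computation: by scaling $\|a_j e^{-c a_j\cdot}\|_{L^{\rho,w}_t}\sim a_j^{1-1/\rho} = 2^{j(2-2/\rho)}$, and since each profile $a_j e^{-c a_j t}$ concentrates at the dyadic time-scale $t\sim 2^{-2j}$ the scales have bounded overlap, so the top scale dominates and the inner $\ell^\sigma$-summation can be traded for the outer $\ell^w$-summation. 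This is exactly where the initial-data fine index $w$ (matching the Lorentz fine index) enters, in contrast to the forcing index $\sigma$.

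The main obstacle is the endpoint $\sigma = 1$. The vector-valued maximal inequality above requires $1<\sigma\le\infty$; at $\sigma=1$ the $\ell^1$-valued maximal inequality genuinely fails (summing the $|x|^{-n}$-type tails of the $M\Phi_j$ produces a logarithmic loss), and neither Minkowski's inequality nor the triangle inequality can exchange the $\ell^1$-in-frequency and $L^{\rho,w}$-in-time norms in the direction the argument needs. Hence the pointwise domination by $M$ is too lossy at $\sigma=1$, and this endpoint will require a separate treatment---most plausibly a duality argument against the $\ell^\infty$-valued case (which is already covered by the interpolation above), or a refinement of the time-convolution step that keeps the bounded overlap of the frequency-dependent scales $a_j e^{-c a_j(t-s)}$ rather than discarding it through the maximal function.
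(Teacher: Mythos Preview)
Your proposal follows the same Duhamel decomposition and per-block Young/maximal-function analysis as the paper, and your diagnosis of the $\sigma=1$ endpoint (duality against the $\ell^\infty$-valued estimate) is exactly what the paper does. The difference is in the order of operations and in the initial-data step.

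For the forcing term you interpolate the Fefferman--Stein inequality itself to reach $L^{\rho,w}(\ell^\sigma)$ directly; the paper instead first proves the $L^\rho(\dot{B}^0_{X,\sigma})$-bound for the full operator $f\mapsto\Delta\int_0^t e^{(t-s)\Delta}f(s)\,{\rm d}s$ (Lemma~\ref{lem:1}), handling all $1\le\sigma\le\infty$ including $\sigma=1$ by duality, and only \emph{then} interpolates in $\rho$ via Lemma~\ref{lem:250808-3} to obtain~(\ref{eq:250812-4}). Both routes are valid; the paper's ordering has the practical advantage that the $\sigma=1$ duality is carried out in $L^\rho$, where the pairing is straightforward, rather than in $L^{\rho,w}$.

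For the initial data your ``bounded overlap'' sketch is the right picture but not yet a proof: trading the inner $\ell^\sigma$ for the outer $\ell^w$ while keeping $\rho$ fixed would require an identity of the type $(L^{\rho,1},L^{\rho,\infty})_{\theta,w}=L^{\rho,w}$ together with its sequence-space analogue, and neither is supplied by Lemmas~\ref{lem:250808-3}--\ref{lem:250808-1} as stated. The paper sidesteps this by proving the \emph{diagonal} estimate $\|\Delta e^{t\Delta}u_0\|_{L^\tau(\dot{B}^0_{X,1})}\lesssim\|u_0\|_{\dot{B}^{2-2/\tau}_{X,\tau}}$ for every $1\le\tau\le\infty$ and then interpolating with \emph{varying smoothness index}: since $2-2/\rho_1\ne 2-2/\rho_2$, the first part of Lemma~\ref{lem:250808-1} applies and leaves the fine index $w$ completely free, matching the free second index on the target side from Lemma~\ref{lem:250808-3}. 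That decoupling of $w$ from $\rho$ via off-diagonal Besov interpolation is the one concrete idea your outline is missing.
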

Note that by the lift operator
(Lemma \ref{lem:lift}),
we can genaralize the index of 
Besov space to $s \in {\mathbb R}$.

The maximal regularity estimate is one of the powerful tools for partial differential equations.
For example, combining these estimates and the fixed point argument, we can obtain the existence and uniqueness of solutions to quasi-linear partial differential equations.
Many researchers investigate this estimates in general setting.
(See \cite{DoVe87, KW04, LSU68, Weis01} and the references therein.)
However we can't apply these results for non-reflexive function spaces.
Hence, if we need the maximal regularity estimate for these spaces, we have to consider for each cases.
For example, Ogawa and Shimizu investigated the estimate for Besov space $\dot{B}^{s}_{1,r}$ in \cite{OgSh10}.
For Besov--Morrey spaces, the author and Sawano proved in
\cite{NS23, NS24}.
Theorems \ref{main2} and \ref{main3} include the non-reflexive case.
(For example, when $X$ is Morrey space, 
then $\dot{B}^s_{X, r}$ is homogeneous Besov--Morrey space, which is not reflexive.)

We move on to the proof.
By the Duhamel formula, we consider the corresponding integral equations
\[
u(t)=e^{t\Delta}u_0
-\int_0^t e^{(t-s)\Delta}f(s) {\rm d}s
\quad (t>0).
\]
First, we consider the linear term.

\begin{lemma}\label{thm:1.9}
Let $1 \le w \le \infty$ and $1 \le \rho<\infty$.
Then
\[
\|\Delta e^{t\Delta}u_0\|_{L^{\rho,w}(0,\infty;\dot{B}^{0}_{X, 1})}
\lesssim
\|u_0\|_{\dot{B}^{2-2/\rho}_{X, w}}
\]
for all $u_0 \in \dot{B}^{2-2/\rho}_{X, w}({\mathbb R}^n)$.
\end{lemma}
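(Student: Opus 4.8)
The plan is to combine a Littlewood--Paley localisation with the standing hypothesis that Young's inequality holds on $X$, thereby reducing the whole statement to a scalar inequality in the time variable, which is then lifted to the Lorentz scale by real interpolation. Since $\|g\|_{\dot{B}^0_{X,1}}=\sum_{j}\|\varphi_j(D)g\|_X$, I would first fix a frequency $j$ and analyse $\varphi_j(D)\Delta e^{t\Delta}u_0$. Choosing $\eta\in C^\infty_{\rm c}({\mathbb R}^n)$ with $\eta\equiv 1$ on $\mathrm{supp}\,\varphi$ and setting $\eta_j=\eta(2^{-j}\cdot)$, the identity $\eta_j\varphi_j=\varphi_j$ gives
\[
\varphi_j(D)\Delta e^{t\Delta}u_0=K_{j,t}*\big(\varphi_j(D)u_0\big),\qquad K_{j,t}={\mathcal F}^{-1}\big[-|\xi|^2 e^{-t|\xi|^2}\eta_j\big].
\]
Because $X$ satisfies \eqref{eq:250421-1}, Theorem \ref{main1}(1) applies with $f=\varphi_j(D)u_0\in X$ and $g=K_{j,t}\in L^1$, yielding
\[
\|\varphi_j(D)\Delta e^{t\Delta}u_0\|_X\lesssim \|K_{j,t}\|_{L^1}\,\|\varphi_j(D)u_0\|_X .
\]

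The decisive quantity is therefore $\|K_{j,t}\|_{L^1}$. After the dilation $\xi=2^j\zeta$ the symbol equals $2^{2j}$ times the fixed profile $-|\zeta|^2 e^{-\tau|\zeta|^2}\eta(\zeta)$ with $\tau=t2^{2j}$, supported in $|\zeta|\sim 1$; since the $L^1$-norm of an inverse Fourier transform is invariant under this dilation, I expect $\|K_{j,t}\|_{L^1}\lesssim 2^{2j}e^{-c\,t2^{2j}}$ for some $c>0$, the exponential arising from $e^{-\tau|\zeta|^2}\le e^{-c\tau}$ on the annulus and the polynomial loss from differentiating being absorbed by it. \emph{This is the one genuinely technical point; once the sharp factor $2^{2j}e^{-c\,t2^{2j}}$ is in hand, the rest is bookkeeping.} Writing $a_j=\|\varphi_j(D)u_0\|_X$ and $\phi_j(t)=2^{2j}e^{-c\,t2^{2j}}$, the estimate reduces the lemma to bounding $\big\|\,\|\Delta e^{t\Delta}u_0\|_{\dot{B}^0_{X,1}}\,\big\|_{L^{\rho,w}(0,\infty)}$ through $\big\|\sum_{j}\phi_j(t)a_j\big\|_{L^{\rho,w}(0,\infty)}$.

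Now I would treat the diagonal first. The elementary computation $\|\phi_j\|_{L^\rho(0,\infty)}\sim 2^{(2-2/\rho)j}$ together with the triangle inequality in $L^\rho$ gives, for every $1\le\rho<\infty$, the endpoint bound
\[
\|\Delta e^{t\Delta}u_0\|_{L^\rho(0,\infty;\dot{B}^0_{X,1})}\lesssim \|u_0\|_{\dot{B}^{2-2/\rho}_{X,1}} .
\]
For $1<\rho<\infty$ pick $1\le\rho_0<\rho<\rho_1<\infty$ and $\theta\in(0,1)$ with $1/\rho=(1-\theta)/\rho_0+\theta/\rho_1$, and interpolate the operator $u_0\mapsto\Delta e^{t\Delta}u_0$ between the two endpoints by the real method $(\cdot)_{\theta,w}$. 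By Lemma \ref{lem:250808-3}, applied with the Banach space $\dot{B}^0_{X,1}$, the target becomes $L^{\rho,w}(0,\infty;\dot{B}^0_{X,1})$; by Lemma \ref{lem:250808-1} (first part, valid since $2-2/\rho_0\neq 2-2/\rho_1$) the source $\big(\dot{B}^{2-2/\rho_0}_{X,1},\dot{B}^{2-2/\rho_1}_{X,1}\big)_{\theta,w}$ equals $\dot{B}^{2-2/\rho}_{X,w}$ for \emph{every} $w$. This settles $1<\rho<\infty$ and all $1\le w\le\infty$.

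It remains to cover $\rho=1$, which is an endpoint in $\rho$; here I interpolate in $w$ instead. The pair $(1,1)$ is the diagonal bound above (note $\|\phi_j\|_{L^1}$ is independent of $j$, so the source is $\dot{B}^0_{X,1}$). The pair $(1,\infty)$ follows from $\sum_{j}\phi_j(t)\lesssim 1/t$, which gives $\|\Delta e^{t\Delta}u_0\|_{\dot{B}^0_{X,1}}\lesssim t^{-1}\sup_j a_j$ and hence a bound by $\|u_0\|_{\dot{B}^0_{X,\infty}}$, since $t^{-1}\in L^{1,\infty}(0,\infty)$. Interpolating these two with $(\cdot)_{\theta,w}$, $\theta=1-1/w$, through the Lorentz reiteration $(L^{1,1},L^{1,\infty})_{\theta,w}=L^{1,w}$ and through Lemma \ref{lem:250808-1} (second part, $(\dot{B}^0_{X,1},\dot{B}^0_{X,\infty})_{\theta,w}=\dot{B}^0_{X,w}$) recovers $\rho=1$ for all $w$, completing the proof.
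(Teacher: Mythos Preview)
Your argument is correct and follows the same overall scheme as the paper---reduce to a scalar inequality via Young's inequality on $X$, establish a diagonal $L^\tau$ estimate, then interpolate---but with two noteworthy differences. First, your diagonal step is sharper and more elementary: the triangle inequality in $L^\rho$ yields the source space $\dot{B}^{2-2/\rho}_{X,1}$ directly, whereas the paper expends a H\"older splitting in the $j$-sum (with exponents $\alpha+\beta=1$, $\beta<2/\tau$) to reach only $\dot{B}^{2-2/\tau}_{X,\tau}$; since Lemma~\ref{lem:250808-1} forgets the inner index anyway, nothing is lost by your simpler route. Second, you handle the endpoint $\rho=1$ separately via the pair $(L^1,L^{1,\infty})$ and the reiteration $(L^{1,1},L^{1,\infty})_{\theta,w}=L^{1,w}$ together with the second clause of Lemma~\ref{lem:250808-1}; the paper's interpolation (which requires $1\le\rho_1<\rho<\rho_2\le\infty$) in fact only reaches $1<\rho<\infty$, so your extra step genuinely fills that gap. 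The one caveat is that the vector-valued Lorentz reiteration with coinciding primary index that you invoke is not among the paper's stated lemmas; it is standard (e.g.\ via the description of $L^{p,q}(Y)$ as a weighted $L^q$ norm of the decreasing rearrangement of $t\mapsto\|f(t)\|_Y$), but you should supply a reference.
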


\begin{proof}
It suffices to show that 
\begin{align}\label{eq:250612-1}
%\left(
%\|\Delta e^{t\Delta}u_0\|_{L^{\rho}(0,\infty;\dot{\mathcal N}^{0}_{pq\rho})}
%\lesssim
%\right)
\|\Delta e^{t\Delta}u_0\|_{L^{\tau}(0,\infty;\dot{B}^{0}_{X, 1})}
\lesssim
\|u_0\|_{\dot{B}^{2-2/\tau}_{X, \tau}}
\end{align}
for all $1 \le \tau \le \infty$.
Indeed, if we have (\ref{eq:250612-1}), 
by Lemmas \ref{lem:250808-3} and \ref{lem:250808-1}, for 
$\dfrac1\rho=\dfrac{1}{2\rho_1}+\dfrac{1}{2\rho_2}$ 
$(1\le \rho_1<\rho< \rho_2 \le \infty)$,
\[
L^{\rho,w}(0,\infty;\dot{B}^{0}_{X, 1})
=
\left(L^{\rho_1}(0,\infty;\dot{B}^{0}_{X, 1}), 
L^{\rho_2}(0,\infty;\dot{B}^{0}_{X, 1})\right)_{1/2, w}
\]
and
\[
\dot{B}^{2-2/\rho}_{X, w}
=
\left(
\dot{B}^{2-2/\rho_1}_{X, \rho_1},
\dot{B}^{2-2/\rho_2}_{X, \rho_2}
\right)_{1/2, {w}}
\]
for all $w \in [1, \infty]$.

Thus, we concentrate on (\ref{eq:250612-1}).
Let $\tau=\infty$. Then, by Young's inequality, we have
\begin{align*}
\lefteqn{
\|\Delta e^{t\Delta}u_0\|_{\dot{B}_{X, 1}^0}
}\\
&=
\sum_{j=-\infty}^\infty
\|\Delta\varphi_j(D) e^{t\Delta}u_0\|_{X}\\
&\lesssim
\sum_{j=-\infty}^\infty
\left\|{\mathcal F}^{-1}[e^{-t|\cdot|}]\right\|_{L^1}
\|\Delta\varphi_j(D) u_0\|_{X}
\sim
\sum_{j=-\infty}^\infty
\|\Delta\varphi_j(D) u_0\|_{X}
=
\|u_0\|_{\dot{B}_{X, 1}^2}.
\end{align*}

Let $1<\tau<\infty$.
We write the left-hand side out in full:
\begin{align*}
\left(
\int_0^\infty
(\|\Delta e^{t\Delta}u_0\|_{\dot{B}_{X, 1}^0})^\tau{\rm d}t
\right)^{\frac{1}{\tau}}
&=\left\{
\int_0^\infty
\left(
\sum_{j=-\infty}^\infty
\|\Delta\varphi_j(D) e^{t\Delta}u_0\|_{X}
\right)^{\tau}{\rm d}t
\right\}^{\frac{1}{\tau}}.
\end{align*}
Let $\Phi \in C^\infty_{\rm c}({\mathbb R}^n)$ be 
a radial function
%an even function 
that vanishes on $B(1)$
and assumes $1$ on the support of $\varphi$.
We write
$\Phi_j(\xi)\equiv\Phi(2^{-j}\xi)$.
Then by the Fourier transform, we obtain
\begin{align*}
\Delta \varphi_j(D)F
&=\Delta \Phi_j(D) \varphi_j(D)F
\simeq
4^{j} \left(
 2^{jn}{\mathcal F}^{-1}[|\cdot|^2\Phi](2^j\cdot)
\right)
*
\varphi_j(D)F
\end{align*}
By the Young's inequality
$\|\Psi*G\|_{X} \le \|\Psi\|_{L^1}\|G\|_{X}$
and 
the fact
\[
\left\|
2^{jn}{\mathcal F}^{-1}[|\cdot|^2\Phi](2^j\cdot)
\right\|_{L^1}
=
\left\|
{\mathcal F}^{-1}[|\cdot|^2\Phi]
\right\|_{L^1}
<\infty,
\]
we obtain
\begin{align*}
\left(
\int_0^\infty
(\|\Delta e^{t\Delta}u_0\|_{\dot{B}_{X,1}^0})^\tau{\rm d}t
\right)^{\frac{1}{\tau}}
&\lesssim
\left\{
\int_0^\infty
\left(
\sum_{j=-\infty}^\infty 4^{j}
\|e^{t\Delta} \varphi_j(D) u_0\|_{X}
\right)^{\tau}{\rm d}t
\right\}^{\frac{1}{\tau}}\\
&=
\left\{
\int_0^\infty
\left(
\sum_{j=-\infty}^\infty 4^{j}
\|\Phi_j(D)e^{t\Delta} \varphi_j(D) u_0\|_{X}
\right)^{\tau}{\rm d}t
\right\}^{\frac{1}{\tau}}.
\end{align*}
Since $\supp \, \Phi_j \subset (|\xi| \ge 2^j)$, we have
$\|{\mathcal F}^{-1}[\Phi_je^{-t|\cdot|^2}]\|_{L^1}
\lesssim
e^{-4^{j}t}$.
Thus once again
by the Young inequality, we obtain
%By the inequality
%$\|\Phi*F\|_{{\mathcal M}^p_q} \le \|\Phi\|_{L^1}\|F\|_{{\mathcal M}^p_q}$, we obtain
\begin{align*}
\left(
\int_0^\infty
(\|\nabla \exp(t\Delta)u_0\|_{\dot{B}_{X, 1}^0})^\tau{\rm d}t
\right)^{\frac{1}{\tau}}
\lesssim
\left\{
\int_0^\infty
\left(
\sum_{j=-\infty}^\infty 4^{j}\exp(-4^jt)
\| \varphi_j(D) u_0\|_{X}
\right)^{\tau}{\rm d}t
\right\}^{\frac{1}{\tau}}.\\
\end{align*}

We estimate the sum on the right-hand side by H\"older's inequality.
To this end, we let $\alpha,\beta>0$ satisfy 
$\alpha+\beta=1$ and $\beta<2/\tau$.
Then
by H\"{o}lder's inequality,
\begin{align*}
\lefteqn{
\sum_{j=-\infty}^\infty
4^j\exp(-4^jt)\|\varphi_j(D)u_0\|_{X}
}\\
&\le
\left\{
\sum_{j=-\infty}^\infty
(4^{j\alpha}\exp(-4^{j-1}t)
\|\varphi_j(D)u_0\|_{X})^{\tau}
\right\}^{\frac{1}{\tau}}
\left\{
\sum_{j=-\infty}^\infty
\left(
4^{j\beta}\exp(-3\cdot 4^{j-1}t)\right)^{\tau'}
\right\}^{\frac1{\tau'}}\\
&\lesssim
t^{-\beta}
\left\{
\sum_{j=-\infty}^\infty
(4^{j\alpha}\exp(-4^{j-1}t)
\|\varphi_j(D)u_0\|_{X})^{\tau}
\right\}^{\frac{1}{\tau}}.
\end{align*}
%%%%%%%%%%%%%%%%%%%%%%%%%%%%%%%%%%%%%
\begin{comment}
We have only to consider $t \in [1,4]$.
Since $4^{j\beta}e^{-3\cdot 4^j}
\le \max(4^{j\beta}, 3^{-1}4^{j(\beta-1)})$,
the second sum is finite.
\end{comment}
%%%%%%%%%%%%%%%%%%%%%%%%%%%%%%%%%%%%%
By taking the $\tau$-th power, we have
\begin{align*}
\left(
\sum_{j=-\infty}^\infty
4^j\exp(-4^jt)\|\varphi_j(D)u_0\|_{X}
\right)^{\tau}
&\lesssim
t^{-\beta\tau}
\sum_{j=-\infty}^\infty
(4^{j\alpha}\exp(-4^{j-1}t)
\|\varphi_j(D)u_0\|_{X})^{\tau}.
\end{align*}
If we integrate against $t>0$,
we obtain
\[
\left\{
\int_0^\infty
(\|\Delta \exp(t\Delta)u_0\|_{\dot{B}_{X, 1}^0})^\tau{\rm d}t
\right\}^{\frac{1}{\tau}}
\lesssim
\left\{
\sum_{j=-\infty}^\infty
(2^{j(2-\frac{2}{\tau})}\|\varphi_j(D)u_0\|_{X})^{\tau}
\right\}^{\frac{1}{\tau}},
\]
as required.
Finally, when $\tau=1$, 
same arguments as the case $1<\tau<\infty$ are valid.
\end{proof}

Next, we consider the nonlinear term.

\begin{lemma}\label{lem:1}
Let $1 \le \sigma \le \infty$ and $1 <\rho<\infty$.
Then, 
\begin{equation}\label{eq:220924-21}
\left\|\Delta\int_0^t  e^{(t-\tilde{t})\Delta}[f(\tilde{t})]{\rm d}\tilde{t}
\right\|_{L^\rho(0,\infty;\dot{B}^0_{X, \sigma})}
\lesssim
\|f\|_{L^\rho(0,\infty;\dot{B}^0_{X, \sigma})}
\end{equation}
for all $f \in L^\rho(0,\infty;\dot{B}^0_{X, \sigma}({\mathbb R}^n))$.
\end{lemma}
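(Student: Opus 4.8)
The plan is to freeze the time‑convolution structure block by block in frequency, reduce each Littlewood--Paley piece to a scalar convolution in $t$ by means of Young's inequality on $X$, and only then sum in the frequency index. Write $u_{\rm NL}(t)=\Delta\int_0^t e^{(t-\tilde{t})\Delta}f(\tilde{t})\,{\rm d}\tilde{t}$. First I localize: since $\Phi_k=\Phi(2^{-k}\cdot)$ equals $1$ on the support of $\varphi_k$ and vanishes near the origin, one has $\varphi_k(D)\Delta e^{(t-\tilde{t})\Delta}=\Delta\Phi_k(D)e^{(t-\tilde{t})\Delta}\varphi_k(D)$, and a scaling argument exactly as in the proof of Lemma \ref{thm:1.9} gives the kernel bound
\[
\left\|{\mathcal F}^{-1}\!\left[|\cdot|^2\Phi_k\,e^{-(t-\tilde{t})|\cdot|^2}\right]\right\|_{L^1}\lesssim 4^k e^{-c4^k(t-\tilde{t})}
\]
with $c>0$ and the implicit constant independent of $k$. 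Combining this with the Minkowski inequality \eqref{eq:250812-2} and Young's inequality on $X$ (Theorem \ref{main1}(1), available because $X$ satisfies \eqref{eq:250421-1}) yields, with $g_k(\tilde{t}):=\|\varphi_k(D)f(\tilde{t})\|_X$, the per‑block estimate
\[
\|\varphi_k(D)u_{\rm NL}(t)\|_X\lesssim\int_0^t 4^k e^{-c4^k(t-\tilde{t})}g_k(\tilde{t})\,{\rm d}\tilde{t}.
\]
At this point the space $X$ has disappeared and the task is the purely scalar sequence estimate $\|(\int_0^{\cdot} 4^k e^{-c4^k(\cdot-\tilde{t})}g_k\,{\rm d}\tilde{t})_k\|_{L^\rho_t(\ell^\sigma_k)}\lesssim\|(g_k)_k\|_{L^\rho_t(\ell^\sigma_k)}$, whose right‑hand side is $\|f\|_{L^\rho(0,\infty;\dot{B}^0_{X,\sigma})}$.

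For $1<\sigma\le\infty$ I would invoke Lemma \ref{lem:221222-1} to dominate each block by the Hardy--Littlewood maximal function, $\int_0^t 4^k e^{-c4^k(t-\tilde{t})}g_k(\tilde{t})\,{\rm d}\tilde{t}\lesssim Mg_k(t)$, uniformly in $k$ thanks to the scaling built into the kernel. Then the Fefferman--Stein vector‑valued inequality (Lemma \ref{lem:250829-1}, valid for $1<\rho<\infty$ and $1<\sigma\le\infty$) closes the estimate at once, and the comparison of $L^\rho_t(\ell^\sigma_k)$ on both sides gives the claim.

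The genuinely delicate point is the endpoint $\sigma=1$, which lies outside the range of Lemma \ref{lem:250829-1} (the Fefferman--Stein inequality fails for $\ell^1$), so the maximal‑function reduction cannot be summed directly. Here I would argue by duality in the time variable: testing $\sum_k\int_0^{\cdot} 4^k e^{-c4^k(\cdot-\tilde{t})}g_k\,{\rm d}\tilde{t}$ against $0\le h\in L^{\rho'}_t$ with $\|h\|_{L^{\rho'}}\le1$ and using Tonelli's theorem, the pairing equals $\sum_k\int_0^\infty g_k(\tilde{t})\,(T_k^\ast h)(\tilde{t})\,{\rm d}\tilde{t}$ with $T_k^\ast h(\tilde{t})=\int_{\tilde{t}}^\infty 4^k e^{-c4^k(t-\tilde{t})}h(t)\,{\rm d}t$. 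The backward analogue of Lemma \ref{lem:221222-1} gives $T_k^\ast h\lesssim Mh$ uniformly in $k$, so the pairing is at most $C\int_0^\infty\bigl(\sum_k g_k\bigr)Mh\,{\rm d}\tilde{t}\le C\|\sum_k g_k\|_{L^\rho}\|Mh\|_{L^{\rho'}}\lesssim\|\sum_k g_k\|_{L^\rho}$, where $\rho'>1$ makes $M$ bounded on $L^{\rho'}$. Taking the supremum over $h$ recovers the $\sigma=1$ estimate.

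I expect this endpoint, rather than the main‑range argument, to be the principal obstacle, since it is exactly where the vector‑valued maximal inequality breaks down and a separate duality argument is forced. The remaining effort is routine: verifying the uniform‑in‑$k$ kernel bounds by scaling, and noting that the case $\tau$‑type book‑keeping is unnecessary because the heat factor $4^k e^{-c4^k(t-\tilde{t})}$ is an $L^1$‑normalized convolution kernel in $\tilde{t}$, so no extra Hölder splitting in the frequency index is needed once the maximal‑function (or duality) reduction is in place.
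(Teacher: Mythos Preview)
Your argument is correct. For the range $1<\sigma\le\infty$ it is essentially identical to the paper's: per-block Young's inequality on $X$ to reduce to the scalar kernel $4^k e^{-c4^k(t-\tilde t)}$, domination by $Mg_k$ via Lemma~\ref{lem:221222-1}, and then Fefferman--Stein (Lemma~\ref{lem:250829-1}).

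At the endpoint $\sigma=1$ your route differs from the paper's and is in fact more economical. You dualize only in the time variable, testing against a scalar $0\le h\in L^{\rho'}_t$; the backward analogue of Lemma~\ref{lem:221222-1} gives $T_k^\ast h\lesssim Mh$ uniformly in $k$, so the sum over $k$ falls on the $g_k$'s and the scalar $L^{\rho'}$-boundedness of $M$ closes the estimate. The paper instead dualizes simultaneously in time and in the sequence index (using the $\ell^1$--$\ell^\infty$ pairing), reducing to the vector-valued inequality
\[
\Bigl\|\sup_{j}\int_s^\infty 4^j e^{-4^j(t-s)}g_j(t)\,{\rm d}t\Bigr\|_{L^{\rho'}_s}\lesssim\Bigl\|\sup_j g_j\Bigr\|_{L^{\rho'}_s},
\]
which it then proves by an integration-by-parts trick to bound each inner integral by $M[g_j](s)$ followed by a second application of Fefferman--Stein, now with $\sigma=\infty$. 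Your approach avoids this second vector-valued step entirely and requires only the scalar maximal theorem, at the modest cost of invoking the (routine) backward variant of Lemma~\ref{lem:221222-1}. The paper's approach, on the other hand, packages the dual estimate as a statement about $\ell^\infty$-valued sequences that could be reused elsewhere.
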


\begin{proof}
Let $1<\sigma<\infty$.
We write out the left-hand side in full: 
%and use the triangle inequality:
\begin{align}\label{eq:250505-4}
\lefteqn{
\left\|\Delta\int_0^t  e^{(t-\tilde{t})\Delta}[f(\tilde{t})]{\rm d}\tilde{t}
\right\|_{L^\rho(0,\infty;\dot{B}^0_{X, \sigma})}
}\nonumber\\
&=
\left\{
\int_0^\infty
\left\{\left\|
\int_0^t \Delta e^{(t-\tilde{t})\Delta}[f(\tilde{t})]{\rm d}\tilde{t}
\right\|_{\dot{B}^0_{X, \sigma}}
\right\}^{\rho}{\rm d}t
\right\}^{\frac{1}{\rho}}\nonumber\\
&=
\left\{
\int_0^\infty
\left[
\sum_{j=-\infty}^{\infty}
\left\{
\left\|
\int_0^t\Delta e^{(t-\tilde{t})\Delta}\varphi_j(D)f(\tilde{t})
{\rm d}\tilde{t}\right\|_{X}
\right\}^{\sigma}\right]^{\frac{\rho}{\sigma}}{\rm d}t
\right\}^{\frac{1}{\rho}}.
\end{align}

Since ${\rm supp}\,\varphi_j \subset B(2^{j+3}) \setminus B(2^{j-1})$, 
by
triangle inequality and Theorem \ref{main1}, we obtain
\begin{align*}
\left\|
\int_0^t\Delta e^{(t-\tilde{t})\Delta}\varphi_j(D)f(\tilde{t})
{\rm d}\tilde{t}\right\|_{X}
&\sim
\left\|
\int_0^t\Delta e^{(t-\tilde{t})\Delta}\left(\sum_{k=j-2}^{j+2}\varphi_k(D)\right)
\varphi_j(D)f(\tilde{t})
{\rm d}\tilde{t}\right\|_{X}\\
&\le
\int_0^t\sum_{k=j-2}^{j+2}
\left\|\Delta e^{(t-\tilde{t})\Delta}\varphi_k(D)
\varphi_j(D)f(\tilde{t})
\right\|_{X}{\rm d}\tilde{t}\\
&\lesssim
\int_0^t
\sum_{k=j-2}^{j+2}\left\| 
{\mathcal F}^{-1}\left[|\cdot|^2e^{-(t-\tilde{t})|\cdot|^2}\varphi_k\right]
\right\|_{L^1}
\left\|\varphi_j(D)f(\tilde{t})
\right\|_{X}{\rm d}\tilde{t}\\
&\sim
\int_0^t4^j e^{-4^j(t-\tilde{t})}
\left\|\varphi_j(D)f(\tilde{t})\right\|_{X}
{\rm d}\tilde{t}.
\end{align*}

Inserting this estimate to (\ref{eq:250505-4}), we have
\begin{align}\label{eq:2}
\nonumber
\lefteqn{
\left\|\Delta\int_0^t  e^{(t-\tilde{t})\Delta}[f(\tilde{t})]{\rm d}\tilde{t}
\right\|_{L^\rho(0,\infty;\dot{B}^0_{X, \sigma})}
}\\
%\sim
%\left\{
%\int_0^\infty
%\left[
%\sum_{j=-\infty}^{\infty}
%\left\{
%\left\|
%\int_0^t\Delta e^{(t-\tilde{t})\Delta}\varphi_j(D)f(\tilde{t})
%{\rm d}\tilde{t}\right\|_{X}
%\right\}^{\sigma}\right]^{\frac{\rho}{\sigma}}{\rm d}t
%\right\}^{\frac{1}{\rho}}\nonumber\\
&\lesssim
\left\{
\int_0^\infty
\left[
\sum_{j=-\infty}^{\infty}
\left\{
\int_0^t4^j e^{-4^j(t-\tilde{t})}
\left\|\varphi_j(D)f(\tilde{t})\right\|_{X}
{\rm d}\tilde{t}
\right\}^{\sigma}\right]^{\frac{\rho}{\sigma}}{\rm d}t
\right\}^{\frac{1}{\rho}}. 
\end{align}

By Lemma \ref{lem:221222-1},
%the use of the Hardy--Littlewood maximal operator $M$ acting on measurable functions over  the half line $(0,\infty)$,
we obtain
\begin{align*}
\left\|\Delta\int_0^t  e^{(t-\tilde{t})\Delta}[f(\tilde{t})]{\rm d}\tilde{t}
\right\|_{L^\rho(0,\infty;\dot{B}^0_{X, \sigma})}
\lesssim
\left\{
\int_0^\infty\left(
\sum_{j=-\infty}^{\infty}
M\left[\left\|\varphi_j(D)f(\cdot)\right\|_{X}\right](t)^{\sigma}
\right)^{\frac{\rho}{\sigma}}{\rm d}t
\right\}^{\frac{1}{\rho}}.
\end{align*}
Since $1<\rho, \sigma<\infty$,
we are in the position of using the Fefferman--Stein vector-valued maximal inequality
over $(0,\infty)$
to have
\begin{align*}
\left\|\Delta\int_0^t  e^{(t-\tilde{t})\Delta}[f(\tilde{t})]{\rm d}\tilde{t}
\right\|_{L^\rho(0,\infty;\dot{B}^0_{X, \sigma})}
&\lesssim
\left\|
\left(
\sum_{j=-\infty}^{\infty}
\left\|\varphi_j(D)f\right\|_{X}^{\sigma}
\right)^{\frac{1}{\sigma}}
\right\|_{L^{\rho}_t}
=
\|f\|_{L^\rho(0,\infty;\dot{B}^0_{X, \sigma})}.
\end{align*}
When $\sigma=\infty$, the same argument is valid.

Let $\sigma=1$.
By the duality argument, we only to show that
\begin{align}\label{eq-1}
\left\|\sup_{j \in {\mathbb Z}}\int_s^\infty  4^je^{-4^j(t-s)}[g_j(t)]{\rm d}t
\right\|_{L^{\rho'}_s(0,\infty)}
\lesssim
\left\|\sup_{j \in {\mathbb Z}}g_j\right\|_{L^{\rho'}_s(0,\infty)}
\end{align}
for all sequences $\{g_j\} \subset L^{\rho'}(0,\infty;\ell^\infty)$ of non-negative functions.
Indeed, if we have (\ref{eq-1}),
by (\ref{eq:2}) the estimate 
\begin{align*}
\left\|\Delta\int_0^t  e^{(t-s)\Delta}[f(s)]{\rm d}s
\right\|_{L^\rho(0,\infty;\dot{B}^0_{X, 1})}
\lesssim
\left\|
\sum_{j=-\infty}^{\infty}
\int_0^t4^j e^{-4^j(t-s)}
\left\|\varphi_j(D)f(s)\right\|_{X}
{\rm d}s
\right\|_{L^\rho(0,\infty)}
\end{align*}
is derived.
For $g_j \in L^{\rho'}(0,\infty;\ell^\infty)$ with 
$\|g_j\|_{L^{\rho'}(0,\infty;\ell^\infty)}=1$, 
using the duality argument and the Fubini theorem, we have
\begin{align*}
\lefteqn{
\left\|\Delta\int_0^t  e^{(t-s)\Delta}[f(s)]{\rm d}s
\right\|_{L^\rho(0,\infty;\dot{B}^0_{X, 1})}
}\\
&\lesssim
\int_0^\infty
\sum_{j=-\infty}^{\infty}\left[
\int_0^t4^j e^{-4^j(t-s)}
\left\|\varphi_j(D)f(s)\right\|_{X}
{\rm d}s
\right]
g_j(t)
{\rm d}t  \\
&=
\int_0^t
\int_s^\infty
\sum_{j=-\infty}^{\infty}
4^j e^{-4^j(t-s)}
\left\|\varphi_j(D)f(s)\right\|_{X}g_j(t)
{\rm d}t
{\rm d}s\\
&=
\int_0^t
\sum_{j=-\infty}^{\infty}
\left\|\varphi_j(D)f(s)\right\|_{X}
\left(
\int_s^\infty
4^j e^{-4^j(t-s)}
g_j(t)
{\rm d}t
\right)
{\rm d}s.
\end{align*}
Hence, by H\"older's inequality, we have

\begin{align*}
\lefteqn{
\left\|\Delta\int_0^t  e^{(t-s)\Delta}[f(s)]{\rm d}s
\right\|_{L^\rho(0,\infty;\dot{B}^0_{X, 1})}
}\\
&\lesssim
\left\|
\sum_{j=-\infty}^{\infty}
\left\|\varphi_j(D)f(s)\right\|_{X}
\right\|_{L^\rho}
\left\|
\sup_{j}
\int_s^\infty
4^j e^{-4^j(t-s)}
g_j(t)
{\rm d}t
\right\|_{L^{\rho'}}
\lesssim
\|f\|_{L^\rho(0,\infty;\dot{B}^0_{X, 1})}.
\end{align*}

So, we turn to show (\ref{eq-1}).
Fixed $s \in (0, \infty)$.
Let $\displaystyle G_j(t)=\int_s^t g_j(\tilde{t}) {\rm d}\tilde{t}$
for $s<t<\infty$.
Note that
$G_j(t) \le (t-s) M[g_j](s)$
and
$G_j'(t)=g_j(t)$ holds.
Hence, by integration by parts, we obtain
\begin{align*}
\int_s^\infty  4^je^{-4^j(t-s)}[g_j(t)]{\rm d}t
&=
\int_s^\infty  4^je^{-4^j(t-s)}[G_j'(t)]{\rm d}t\\
&=
\int_s^\infty  16^je^{-4^j(t-s)}[G_j(t)]{\rm d}t\\
&\le
M[g_j(s)]
\int_s^\infty  16^j(t-s)e^{-4^j(t-s)}{\rm d}t
=
M[g_j(s)].
\end{align*}

Using the Fefferman-Stein vector-valued inequality
(Lemma \ref{lem:250829-1}), we have the desired result.
\end{proof}

Finally, we consider the case $\rho=1$.

\begin{lemma}
For all 
$f \in L^1(0,\infty;\dot{B}^0_{X, 1}({\mathbb R}^n))$,
we have
\begin{equation}\label{eq:220924-21}
\left\|\Delta\int_0^t  e^{(t-\tilde{t})\Delta}[f(\tilde{t})]{\rm d}\tilde{t}
\right\|_{L^1(0,\infty;\dot{B}^0_{X, 1})}
\lesssim
\|f\|_{L^1(0,\infty;\dot{B}^0_{X, 1})}.
\end{equation}
\end{lemma}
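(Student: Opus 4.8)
The plan is to observe that the endpoint $\rho=1$ (with $\sigma=1$) is in fact softer than the range $1<\rho<\infty$ already treated in Lemma~\ref{lem:1}: here the interchange of a sum and two integrals, together with a single explicit integral, replaces both H\"older's inequality and the Fefferman--Stein inequality. First I would reuse verbatim the Fourier-support computation from the proof of Lemma~\ref{lem:1}. Since ${\rm supp}\,\varphi_j \subset B(2^{j+3})\setminus B(2^{j-1})$, Theorem~\ref{main1} applied to the spatial convolution together with the kernel bound $\sum_{k=j-2}^{j+2}\|{\mathcal F}^{-1}[|\cdot|^2 e^{-(t-\tilde t)|\cdot|^2}\varphi_k]\|_{L^1}\sim 4^j e^{-4^j(t-\tilde t)}$ yields
\[
\left\|\int_0^t \Delta e^{(t-\tilde t)\Delta}\varphi_j(D)f(\tilde t)\,{\rm d}\tilde t\right\|_X
\lesssim
\int_0^t 4^j e^{-4^j(t-\tilde t)}\|\varphi_j(D)f(\tilde t)\|_X\,{\rm d}\tilde t.
\]

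Next I would write the left-hand side of \eqref{eq:220924-21} out in full. Because both the inner $\dot B^0_{X,1}$ norm and the outer $L^1(0,\infty)$ norm are unweighted sums and integrals, this gives
\[
\left\|\Delta\int_0^t e^{(t-\tilde t)\Delta}f(\tilde t)\,{\rm d}\tilde t\right\|_{L^1(0,\infty;\dot B^0_{X,1})}
\lesssim
\int_0^\infty \sum_{j=-\infty}^\infty \int_0^t 4^j e^{-4^j(t-\tilde t)}\|\varphi_j(D)f(\tilde t)\|_X\,{\rm d}\tilde t\,{\rm d}t.
\]
Every term is non-negative, so Tonelli's theorem permits me to interchange the sum and the two integrals freely, in particular to integrate in the outer variable $t$ first.

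The decisive step is then the one-dimensional computation
\[
\int_{\tilde t}^\infty 4^j e^{-4^j(t-\tilde t)}\,{\rm d}t
=\int_0^\infty 4^j e^{-4^j s}\,{\rm d}s=1,
\]
whose value is independent of both $j$ and $\tilde t$. Substituting this back collapses the triple expression to $\int_0^\infty \sum_{j} \|\varphi_j(D)f(\tilde t)\|_X\,{\rm d}\tilde t=\|f\|_{L^1(0,\infty;\dot B^0_{X,1})}$, which is exactly the claimed bound.

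I do not anticipate a genuine obstacle: the point is precisely that the time kernel $4^j e^{-4^j(t-\tilde t)}$ has $L^1$-mass equal to $1$ in $t$, so Young's inequality in the time variable --- realized here simply by Tonelli's theorem plus the explicit integral --- already suffices, and no maximal-function or duality argument is required. The only care needed is in inheriting the kernel estimate from Lemma~\ref{lem:1}, but that computation is identical to the one performed there.
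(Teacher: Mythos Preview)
Your proposal is correct and follows essentially the same route as the paper: both reduce to the pointwise kernel bound established in Lemma~\ref{lem:1} (which is exactly \eqref{eq:2} with $\rho=\sigma=1$), interchange the $t$-integral with the sum and the $\tilde t$-integral by Fubini/Tonelli, and then evaluate $\int_{\tilde t}^\infty 4^j e^{-4^j(t-\tilde t)}\,{\rm d}t=1$ to collapse directly to $\|f\|_{L^1(0,\infty;\dot B^0_{X,1})}$. Your write-up is in fact slightly more careful with the variable bookkeeping than the paper's, but the argument is identical.
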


\begin{proof}
In (\ref{eq:2}), letting $\rho=\sigma=1$, we have
\begin{align*}
\left\|\Delta\int_0^t  e^{(t-\tilde{t})\Delta}[f(\tilde{t})]{\rm d}\tilde{t}
\right\|_{L^1(0,\infty;\dot{B}^0_{X, 1})}
\lesssim
\int_0^\infty
\left[
\sum_{j=-\infty}^{\infty}
\int_0^t4^j e^{-4^j(t-\tilde{t})}
\left\|\varphi_j(D)f(\tilde{t})\right\|_{X}
{\rm d}\tilde{t}
\right]{\rm d}t. 
\end{align*}
By Fubini's theorem, we have
\begin{align*}
\left\|\Delta\int_0^t  e^{(t-\tilde{t})\Delta}[f(\tilde{t})]{\rm d}\tilde{t}
\right\|_{L^1(0,\infty;\dot{B}^0_{X, 1})}
&\lesssim
\int_0^\infty
\sum_{j=-\infty}^{\infty}
\left[
\int_{\tilde{t}}^{\infty}
4^j e^{-4^j(t-\tilde{t})}
{\rm d}\tilde{t}
\right]
\left\|\varphi_j(D)f(\tilde{t})\right\|_{X}
{\rm d}t\\
&\sim
\int_0^\infty
\sum_{j=-\infty}^{\infty}
\left\|\varphi_j(D)f(\tilde{t})\right\|_{X}
{\rm d}t\\
&=
\|f\|_{L^1(0,\infty;\dot{B}^0_{X, 1})}.
\end{align*}
We obtain the desired result.
\end{proof}
Let $1<\rho<\infty$ and $1 \le \sigma \le \infty$.
Thanks to Lemmas \ref{lem:250808-3}, 
%and \ref{lem:250808-1}, 
for 
$\dfrac1\rho=\dfrac{1}{2\rho_1}+\dfrac{1}{2\rho_2}$ 
$(1\le \rho_1<\rho< \rho_2 \le \infty)$,
we obtain
\[
L^{\rho,w}(0,\infty;\dot{B}^{0}_{X, \sigma})
=
\left(L^{\rho_1}(0,\infty;\dot{B}^{0}_{X, \sigma}), 
L^{\rho_2}(0,\infty;\dot{B}^{0}_{X, \sigma})\right)_{1/2, w}
\]
for all $w \in [1, \infty]$.
Thus, the estimate
\begin{equation}\label{eq:250812-4}
\left\|\Delta\int_0^t  e^{(t-\tilde{t})\Delta}[f(\tilde{t})]{\rm d}\tilde{t}
\right\|_{L^{\rho,w}(0,\infty;\dot{B}^0_{X, \sigma})}
\lesssim
\|f\|_{L^{\rho, w}(0,\infty;\dot{B}^0_{X, \sigma})}
\end{equation}
holds 
for all $f \in L^{\rho,w}(0,\infty;\dot{B}^0_{X, \sigma}({\mathbb R}^n))$.

\begin{proof}[Proof of Theorems \ref{main2} and \ref{main3}]

Combining the estimate (\ref{eq:250612-1}),
the embedding (\ref{eq:250812-5}), 
and Lemma \ref{lem:1}, we have Theorem \ref{main2}.
Theorem \ref{main3} is led from Lemma \ref{thm:1.9} and
the estimate (\ref{eq:250812-4}).
\end{proof}

\section*{Acknowledgement}
The author was supported financially by Research Fellowships of the Japan Society for the Promotion of Science for Young Scientists (22J00614, 24K22839).
%The author would like to thank Professor Yoshihiro Sawano and Dr. Naoya Hatano for their helpful comments. 

\end{document}